\numberwithin{equation}{section}
\numberwithin{table}{section}
\theoremstyle{plain}
\newtheorem{theorem}{Theorem}[section]
\newtheorem{lemma}{Lemma}[section]
\newtheorem{prop}{Proposition}[section]
\theoremstyle{remark}
\newtheorem{algorithm}{Algorithm}[section]
\newcommand{\Clement}{{C}l\'ement}
\newcommand{\Nedelec}{N{\'{e}}d{\'{e}}lec }
\newcommand{\dive}{\mathop\mathrm{div}}
\newcommand{\grad}{\ensuremath{\mathop{{\bf{grad}}}}}
\newcommand{\bcurl}{\mathop{\mathbf{curl}}}
\newcommand{\curl}{\mathrm{curl}}
\newcommand{\TT}{\mathscr{T}}
\newcommand{\ba}{{\boldsymbol{a}}}
\newcommand{\bB}{{\boldsymbol{B}}}
\newcommand{\bb}{{\boldsymbol{b}}}
\newcommand{\bC}{{\boldsymbol{C}}}
\newcommand{\bbc}{{\boldsymbol{c}}}
\newcommand{\bz}{{\boldsymbol{z}}}
\newcommand{\be}{{\boldsymbol{e}}}
\newcommand{\bn}{{\boldsymbol{n}}}
\newcommand{\bN}{{\boldsymbol{N}}}
\newcommand{\bpsi}{{\boldsymbol{\psi}}}
\newcommand{\bq}{{\boldsymbol{q}}}
\newcommand{\bxi}{{\boldsymbol{\xi}}}
\newcommand{\bF}{{\boldsymbol{F}}}
\newcommand{\bH}{{\boldsymbol{H}}}
\newcommand{\bL}{{\boldsymbol{L}}}
\newcommand{\bu}{{\boldsymbol{u}}}
\newcommand{\bv}{{\boldsymbol{v}}}
\newcommand{\bw}{{\boldsymbol{w}}}
\newcommand{\RRR}{{\mathbb{R}}}
\newcommand{\br}{{\boldsymbol{r}}}
\newcommand{\bt}{{\boldsymbol{t}}}
\newcommand{\bx}{{\boldsymbol{x}}}
\newcommand{\bX}{{\boldsymbol{X}}}
\newcommand{\bff}{{\boldsymbol{f}}}
\newcommand{\tn}{\textnormal}
\newcommand{\hdivk}{\mathbf{H}_r(\dive\nolimits^k,\Omega)}
\newcommand{\hcurlk}{\mathbf{H}_r(\bcurl\nolimits^k,\Omega)}
\newcommand{\Lr}{L^2_r(\Omega)}
\author{Minah Oh} 
\address{James Madison University, Department of Mathematics and Statistics,
  Harrisonburg, VA 22807.}\email{ohmx@jmu.edu}
  \thanks{The work of the author was partially supported by NSF grant number DMS-1913050.}
\title[Multigrid in Weighted H(div)]{Multigrid in H(div) on Axisymmetric Domains}
\subjclass[2010]{65M55, 65N55, 65F10, 65N30}
\keywords{Multigrid, axisymmetric, weighted Sobolev spaces, mixed method, Fourier finite element}
\begin{document}

\begin{abstract}
In this paper, we will construct and analyze a multigrid algorithm that can be applied to weighted $\bH(\dive)$ problems on a two-dimensional domain.
These problems arise after performing a dimension reduction to a three-dimensional axisymmetric $\bH(\dive)$ problem.
We will use recently developed Fourier finite element spaces that can be applied to axisymmetric $\bH(\dive)$ problems with general data.
We prove that if the axisymmetric domain is convex, then the multigrid V-cycle with modern smoothers will converge uniformly with respect to the meshsize.
\end{abstract}

\maketitle

\section{Introduction}
Let $\breve{\Omega}\subset \RRR^3$ be an axisymmetric domain, i.e., $\breve{\Omega}$ is obtained by rotating $\Omega\subset\RRR^2_+=\{(r,z)\in \RRR^2: r\geq0 \}$ around the $z$-axis. 
Throughout this paper, we will assume that $\breve{\Omega}$ is convex. 
The Hilbert space $\bH(\dive, \breve{\Omega})$ consists of square integrable vector-valued functions defined on $\breve{\Omega}$ whose divergence is also square integrable. The inner product on this space is given by 
\[
(\bu,\bv)_{L^2(\breve{\Omega})} + (\dive\bu,\dive\bv)_{L^2(\breve{\Omega})},
\] 
where $(\cdot,\cdot)_{L^2(\breve{\Omega})}$ denotes the usual $L^2$-inner product.
Consider the following axisymmetric $\bH(\dive)$-problem: find $\bu\in \bH(\dive, \breve{\Omega})$ such that
\begin{equation} \label{3d}
(\bu,\bv)_{L^2(\breve{\Omega})} + (\dive\bu,\dive\bv)_{L^2(\breve{\Omega})} = (\bF,\bv)_{L^2(\breve{\Omega})} \quad\tn{ for all } \bv\in \bH(\dive, \breve{\Omega}).
\end{equation} 
Numerical methods that can be applied to problems like (\ref{3d}) on general $3D$ domains (not necessarily axisymmetric) have many applications, see \cite[section 7]{mg:hdiv} and \cite{Lin:1997} for example, 
and multigrid methods for these problems have been constructed and studied in \cite{AFW:2000} and \cite{Hiptmair:hdiv}.

For problems defined on an axisymmetric domain such as (\ref{3d}), one can use a Fourier series decomposition to change the three-dimensional ($3D$) problem into a sequence of 
two-dimensional ($2D$) problems defined on the meridian domain $\Omega\subset\RRR^2_+$. Fourier Finite Element Methods (Fourier-FEMs) can be used to approximate each Fourier-mode of the solution $\bu$ by using a suitable FEM.
Such dimension reduction is an attractive feature considering computation time, but the resulting weighted $2D$ problems are quite different from the corresponding unweighted problems as we will see in the next section.
The appropriate weighted spaces include functions with singularities at the axis of rotation, 
so the analysis of such weighted problems require special attention. 
When the data function $\bF$ in (\ref{3d}) is independent of the rotational variable $\theta$ when written in terms of cylindrical coordinates, then the solution $\bu$ is also independent of the $\theta$-variable, and only the zero-th Fourier mode of 
$\bu$ is alive. In most applications, however, $\bF$ is dependent on the $\theta$-variable, so $\bu$ has non-vanishing higher order Fourier-modes. In this paper, we construct and analyze a multigrid algorithm that can be applied to 
weighted $2D$-problems arising from (\ref{3d}) that provide an approximate solution to each $k$-th Fourier mode of the exact solution $\bu$ for $|k|\geq 1$. 

Multigrid methods for axisymmetric $\bH(\bcurl)$ problems have been studied in the past. Multigrid for the azimuthal problem was analyzed in \cite{GP:2006}, and another multigrid analysis was done by using line relaxations in \cite{MR2003d:78042}.   
In \cite{CGO:2010}, a multigrid analysis for the V-cycle algorithm was done for the meridian problem by using the smoothers in \cite{AFW:2000} and \cite{H:1998}.
To our knowledge, multigrid methods for axisymmetric $\bH(\dive)$ problems have not been studied yet.
All of the above mentioned papers are assuming that not only the $3D$ domain is axisymmetric but also the data given in the problem is axisymmetric as well. 
One reason for this was perhaps the lack of commuting projections associated with axisymmetric problems with general data. 
This difficulty was overcome in \cite{O:2015} where the author constructed a new family of Fourier-finite element spaces whose interpolation operators satisfy a commuting diagram property.
Therefore, in this paper, we will use the $\bH_r(\dive^k)$-conforming finite element space for $|k|\geq 1$ constructed in \cite{O:2015} to construct and analyze a multigrid V-cycle that can be applied to 
weighted $2D$-problems arising from (\ref{3d}). %These weighted problems will provide an approximate solution to each $k$-th Fourier mode of the exact solution $\bu$. 
We will follow the framework of \cite{AFW:2000} for the multigrid analysis.

This paper is organized as follows: in section~\ref{prelim}, we will give an overview on Fourier-FEMs and state the weighted problem of interest.  
We will also summarize the definitions of some needed weighted spaces and a family of Fourier finite element spaces along with commuting projections onto those spaces. 
In section~\ref{concrete}, we prove better error estimates for the commuting projections constructed in \cite{O:2019}. The new ideas taken here is in the construction and use of operators that have appropriate error estimates in the weighted $L^2$-space
with the measure $r^3drdz$ instead of the usual $rdrdz$. This will help us deal with the multiple $1/r$ terms appearing in the interpolation operators used  in \cite{O:2015}.
In section~\ref{mixed}, a weighted mixed formulation that will be helpful in the multigrid analysis will be studied, and
in the following two sections, the multigrid V-cycle algorithm will be introduced and analyzed. Finally, in section~\ref{numerics} numerical results that support the mathematical theory established in this paper are provided
followed by a section with concluding remarks.
Some technical proofs are included in the Appendix (section~\ref{appendix}) to improve the readability of the paper.

\section{Preliminaries} \label{prelim}

%In this section, we will summarize the weighted spaces that we will use in this paper along with the Fourier finite element spaces that we will use. 
%We will also state the weighted problem to which we will be applying the multigrid algorithm.

In this section, we summarize definitions of weighted spaces as well as Fourier finite element spaces. We will also state the weighted problem of interest.

If $\breve{f}$ is a function defined on $\breve{\Omega}$ that is independent of the $\theta$-variable, then 
\[
\int\int\int_{\breve{\Omega}} \breve{f}(r,\theta,z)^2 dV = 2\pi\int\int_\Omega f(r,z)^2 rdrdz,
\] 
where $\breve{f}$ and $f$ are functions defined on $\breve{\Omega}$ and $\Omega$ respectively with the same formula.
Therefore, we are interested in the weighted $L^2$-space defined by
\[
L^2_r(\Omega)=\{u: \int\int_\Omega u(r,z)^2 rdrdz < \infty \}.
\]
This is a Hilbert space with the inner-product being 
\[
(u,v)_r = \int\int_\Omega uv rdrdz,
\]
and the induced norm will be denoted by $\| \cdot \|_{L^2_r(\Omega)}$. In general, we will use $\| \cdot \|_X$ to denote the norm of the Hilbert space $X$.
Notice that
\[
u\in L^2_r(\Omega) \tn{ if and only if } \dfrac{u}{r} \in L^2_{r^3}(\Omega),
\]
where 
\[
L^2_{r^3}(\Omega)=\{u: \int\int_\Omega u(r,z)^2 r^3drdz < \infty \}.
\]
In general, we may define a weighted $L^2$-space in the following way:
\[
L^2_\alpha(\Omega) = \{u: \int\int_\Omega u(r,z)^2 r^\alpha drdz < \infty \}
\]
with the associated norm
\[
\| u \|_{L^2_\alpha(\Omega} = (\int\int_\Omega u(r,z)^2 r^\alpha drdz)^{1/2}.
\]
In this paper, we will be mainly using $\alpha=1$, but $\alpha=3$ and $\alpha=-1$ will be used in some places. 
Since these are the only three $\alpha$ values that will be used, we will simply write $L^2_r(\Omega)$, $L^2_{r^3}(\Omega)$, and $L^2_{1/r}(\Omega)$ respectively to denote these spaces. 

Let 
\[
\grad\nolimits_{rz}u=(\dfrac{\partial u}{\partial r}, \dfrac{\partial u}{\partial z})^T.
\]
Then, we define
\begin{align*}
H^1_r(\Omega) &= \{u\in L^2_r(\Omega): \grad\nolimits_{rz}u\in L^2_r(\Omega) \}, \\
\tilde{H}_r^1(\Omega) &= H^1_r(\Omega) \cap L^2_{1/r}(\Omega),
\end{align*}
and the associated norm for these spaces are
\begin{align*}
\| u \|_{H^1_r(\Omega)} &= (\| u \|_{L^2_r(\Omega)}^2 + \|\grad\nolimits_{rz} u \|^2_{L^2_r(\Omega)})^{1/2}, \\
\| u \|_{\tilde{H}^1_r(\Omega)} &=  (\| u \|_{H^1_r(\Omega)}^2 + \| u \|^2_{L^2_{1/r}(\Omega)})^{1/2}.
\end{align*}
We will use $\Gamma_1$ to denote the boundary of $\Omega$ that is not on the axis of symmetry, i.e.,
the rotation of $\Gamma_1$ returns $\partial\breve{\Omega}$, and $\Gamma_0$ to denote the part of $\partial{\Omega}$ that is on the axis of symmetry, i.e., $\Gamma_0=\partial\Omega \backslash \Gamma_1$.
Then $H^1_{r,0}(\Omega)$ denotes the closed subspace of $H^1_r(\Omega)$ with vanishing trace on $\Gamma_1$. 
In general, we will use $H^l_r(\Omega)$ to denote the subspace of $L^2_r(\Omega)$ that consists of functions whose distributional derivatives of order $l$
 and under also belong in $L^2_r(\Omega)$. 
 Furthermore, $H^1_{1/r}(\Omega)$ will be used to denote the space of functions in $L^2_{1/r}(\Omega)$ whose gradient also belongs in $L^2_{1/r}(\Omega)$.
 Similarly, $H^1_{r^3}(\Omega)$ denotes the subspace of $L^2_{r^3}(\Omega)$ whose gradient is also in $L^2_{r^3}(\Omega)$.
 We will use boldface to denote vector-valued functions as well as functions spaces consisting of vector-valued functions. For simplicity,
 $\partial_r$ will be used instead of $\dfrac{\partial}{\partial r}$, etc. For $\bv=(v_r,v_\theta,v_z)^T$, we use $\bv_{rz}$ to denote $(v_r,v_z)^T$. 

Many authors have previously studied axisymmetric problems with general data through a Fourier series decomposition.
(See \cite{BDM:1999, Heinrich:1996, Boniface:2005, Nkemzi:2007, FSCM_Part1, FSCM_Part2, FSCM_Maxwell, O:2019} for example.)
Since each Fourier mode is obtained by taking an integral with respect to $\theta$, each Fourier mode is only dependent on variables $r$ and $z$,
and thus by using the axial symmetry of the $3D$ domain $\breve{\Omega}$ and a truncated partial Fourier series, one can reduce the $3D$ problem into $N$ $2D$ problems. 
We use the term Fourier-FEMs when each Fourier-mode of the solution is approximated by using a suitable FEM. 

For scalar-valued functions, the Fourier series decomposition takes the following form:
\[
u = u_0 + \sum_{k=1}^\infty u_k\cos k\theta + \sum_{k=1}^\infty u_{-k}\sin k\theta.
\]
For a vector-valued function, first write 
$\bu=u_r\be_r + u_\theta\be_\theta + u_z\be_z$ by using the cylindrical basis $\be_r$, $\be_\theta$, and $\be_z$.
Then, $\bu=\bu^s+\bu^a$ where
\begin{equation} \label{vector_fourier}
\begin{aligned}
\bu^s &= 
\begin{pmatrix}
u_r^0 \\
0 \\
u_z^0
\end{pmatrix}+ 
\sum_{k=1}^\infty
\begin{pmatrix}
u_r^k \cos k\theta \\
u_\theta^k \sin k\theta \\
u_z^k \cos k\theta
\end{pmatrix}, \\
\bu^a &= 
\begin{pmatrix}
0 \\
u_\theta^0 \\
0
\end{pmatrix}+ 
\sum_{k=1}^\infty
\begin{pmatrix}
u_r^{-k} \sin k\theta \\
u_\theta^{-k} \cos k\theta \\
u_z^{-k} \sin k\theta
\end{pmatrix}.
\end{aligned}
\end{equation}

Next, consider the usual divergence operator in cylindrical coordinates:
\begin{equation*} %\label{grad_curl_div}
\begin{aligned}
%\grad u &= (\partial_r u, \frac{1}{r}\partial_\theta u, \partial_z  u )^T, \\
%\bcurl \bu &= (\frac{1}{r}\partial_\theta u_z - \partial_z u_\theta, \partial_z u_r-\partial_r u_z, \frac{1}{r}(\partial_r (ru_\theta)-\partial_\theta u_r))^T, \\
\dive \bu &= \frac{1}{r}\partial_r (ru_r) + \frac{1}{r}\partial_\theta u_\theta + \partial_zu_z.
\end{aligned}
\end{equation*}
Now consider applying this operator to (\ref{vector_fourier}). 
Then, each $k$-th order Fourier mode in $\dive\bu$ decouple from one another in a weak formulation, and the resulting 
divergence formula that affects the $k$-th Fourier mode is
\[
\dive\nolimits_{rz}\nolimits^{k} 
\begin{bmatrix}
       u_r \\
       u_\theta \\
       u_z           
     \end{bmatrix}
= 
\partial_ru_r + \dfrac{u_r-ku_\theta}{r} + \partial_zu_z.
\]
We then define the following weighted $\bH(\dive)$ space for the $k$-th Fourier mode:
\[
\bH_r(\dive\nolimits^k,\Omega) = \{ \bu\in \bL^2_r(\Omega): \dive\nolimits_{rz}\nolimits^{k} \bu\in L^2_r(\Omega) \}.
\]
This is a Hilbert space with the inner product being
\begin{equation*} 
\Lambda^{k}(\bu,\bv)= (\bu,\bv)_r + (\dive\nolimits_{rz}\nolimits^{k} \bu, \dive\nolimits_{rz}\nolimits^{k} \bv)_r.
\end{equation*}
Let $\bC_h$ be a finite element subspace of $\bH_r(\dive^k,\Omega)$ that will be introduced shortly.  Define
$\Lambda_h^k: \bC_h \rightarrow \bC_h$ by
\begin{equation} \label{problem}
(\Lambda_h^{k}\bu_h,\bv_h)_r = \Lambda^{k}(\bu_h,\bv_h) \quad\tn{ for all } \bu_h, \bv_h\in \bC_h.
\end{equation}
In this paper, we construct and analyze a multigrid algorithm that can be applied to (\ref{problem}).
The analysis done in this paper holds true for any fixed integer $|k| \geq 1$,
so we assume that the Fourier-mode $k$ is fixed. For simplicity of notation, we will write $\Lambda$ instead of $\Lambda^k$.
%We use $C$ to denote a generic constant that may depend on the fixed Fourier-mode $k$.
The norm induced by $\Lambda(\cdot,\cdot)$ is denoted by $\| \cdot \|_{\Lambda}$.

Through a similar process, one also obtains the following $\grad$ and $\bcurl$ formulas that affects the $k$-th Fourier mode:
\begin{align*}
\grad\nolimits_{rz}\nolimits^{k} u 
&= \begin{bmatrix}
      \partial_r u\\
      -\frac{k}{r}u \\
       \partial_z u   
     \end{bmatrix}, \\
\bcurl\nolimits_{rz}\nolimits^{k} 
\begin{bmatrix}
       u_r \\
       u_\theta \\
       u_z           
     \end{bmatrix}
&= 
\begin{bmatrix}
       -(\frac{k}{r}u_z + \partial_z u_\theta)\\
       \partial_z u_r - \partial_r u_z \\
        \frac{ku_r+u_\theta}{r} + \partial_r u_\theta           
     \end{bmatrix},
\end{align*}
and we get the following Hilbert spaces:
\begin{align*}
H_r(\grad\nolimits_{}\nolimits^{k},\Omega) &= \{u\in L^2_r(\Omega): \grad\nolimits_{rz}\nolimits^{k} u \in \bL^2_r(\Omega) \}, \\
\bH_r(\bcurl\nolimits^k,\Omega) &= \{\bu\in \bL^2_r(\Omega): \bcurl\nolimits_{rz}\nolimits^{k}\bu\in \bL^2_r(\Omega) \}.
\end{align*}

Next, let 
\begin{align*}
\grad\nolimits_{rz}\nolimits^{k*} u 
&= \begin{bmatrix}
      \partial_r u\\
      \frac{k}{r}u \\
       \partial_z u   
     \end{bmatrix}, \\
\bcurl\nolimits_{rz}\nolimits^{k*} 
\begin{bmatrix}
       u_r \\
       u_\theta \\
       u_z           
     \end{bmatrix}
&= 
\begin{bmatrix}
       \frac{k}{r}u_z - \partial_z u_\theta \\
       \partial_z u_r - \partial_r u_z \\
        \frac{-ku_r+u_\theta}{r} + \partial_r u_\theta         
     \end{bmatrix},
\end{align*}
and define
\begin{align*}
H_{r,0}(\grad\nolimits_{}\nolimits^{k*},\Omega) &= \{u\in L^2_r(\Omega): \grad\nolimits_{rz}\nolimits^{k*} u \in \bL^2_r(\Omega) \tn{ and } u=0 \tn{ on } \Gamma_1 \}, \\
\bH_{r,0}(\bcurl\nolimits^{k*},\Omega) &= \{\bu\in \bL^2_r(\Omega): \bcurl\nolimits_{rz}\nolimits^{k*}\bu\in \bL^2_r(\Omega) \tn{ and }  \bu_{rz}\cdot\bt=0, u_\theta=0 \tn{ on } \Gamma_1 \}.
\end{align*}
As usual, $\bt$ denotes the unit tangent vector along $\Gamma_1$.% and $\bn$ is the unit outward normal vector along $\Gamma_1$.

Then $\grad\nolimits_{rz}\nolimits^{k*}$ and $\bcurl\nolimits_{rz}\nolimits^{k*}$ are adjoints operators of $-\dive\nolimits_{rz}\nolimits^{k}$ and $\bcurl\nolimits_{rz}\nolimits^{k}$ respectively \cite[Theorem 7.1]{O:2019} in the following sense: 
\begin{align*}
(-\dive\nolimits_{rz}\nolimits^{k}\bu,\bv)_r &= (\bu,\grad\nolimits_{rz}\nolimits^{k*}\bv)_r &&\tn{ for all } \bu\in \bH_r(\dive\nolimits^k,\Omega), \bv\in H_{r,0}(\grad\nolimits_{}\nolimits^{k*},\Omega), \\
(\bcurl\nolimits_{rz}\nolimits^{k}\bu,\bv)_r &= (\bu, \bcurl\nolimits_{rz}\nolimits^{k*}\bv)_r &&\tn{ for all } \bu\in \bH_r(\bcurl\nolimits^k,\Omega), \bv\in \bH_{r,0}(\bcurl\nolimits_{}\nolimits^{k*},\Omega). \\
\end{align*}

Next, let $\TT_h$ be a finite element triangulation of $\Omega$ that satisfies the usual geometrical conformity conditions~\cite{fem:ciarlet}.
We now summarize the family of Fourier finite element spaces constructed in \cite{O:2015}. 
First define the following polynomial spaces:
\begin{align*}
A_1 &= \left\{ \alpha_1r+\alpha_2r^2+\alpha_3rz: \alpha_i\in\RRR \text{ for } 1\leq i\leq 3 \right\}, \\
\bB_1 &= \left\{ 
\begin{pmatrix}
       \beta_1 + \beta_4 r + \beta_3 z - \beta_6 rz \\
       -k\beta_1 + \beta_2 r - k\beta_3 z \\
       \beta_5 r + \beta_6 r^2
     \end{pmatrix}: \beta_i\in\RRR \text{ for } 1\leq i \leq 6
\right\}, \\
\bC_1 &= \left\{ 
\begin{pmatrix}
      k \gamma_1 + \gamma_2 r \\
      \gamma_1 + \gamma_3 r \\
       \gamma_4 + \gamma_2 z
     \end{pmatrix}: \gamma_i\in\RRR \text{ for } 1\leq i \leq 4 
\right\}. 
\end{align*}
We are interested in the following Fourier finite element spaces: 
\begin{equation} \label{fem_spaces}
\begin{aligned}
A_h &= \left\{u\in H_r( \grad\nolimits^k , \Omega): u|_K\in A_1 \text{ for all } K\in \TT_h  \right\}, \\
\bB_h &= \left\{\bu\in \bH_r(\bcurl\nolimits^k , \Omega): \bu|_K\in \bB_1 \text{ for all } K\in \TT_h  \right\}, \\
\bC_h &= \left\{\bu\in \bH_r(\dive\nolimits^k, \Omega): \bu|_K\in \bC_1 \text{ for all } K\in \TT_h  \right\}, \\
D_h &= \left\{u\in L^2_r(\Omega): u|_K \text{ is constant for all } K\in \TT_h  \right\}.
\end{aligned}
\end{equation}
Note that $\bB_h$ and $\bC_h$ are dependent on $k$.
It was proved in \cite[Theorem 4.1]{O:2015} that these Fourier finite element spaces are conforming and the corresponding interpolation operators satisfy the
so-called commuting diagram property with error estimates. We note that $\bB_h$ was constructed separately in \cite{GL:1991,Lacoste:2000}.
Furthermore in \cite{O:2019}, a set of projectors $\Pi_h^{g,k}, \Pi_h^{c,k}, \Pi_h^{d,k},$ and $\Pi_h^{o,k}$ that form a 
W-bounded cochain projection was constructed for this family of Fourier finite element spaces.
The following theorem can be found in \cite[Theorem 5.1]{O:2019}.

\begin{theorem} \label{ProjSet}
\label{thm:main}
There exists projections $\Pi_h^{g,k}, \Pi_h^{c,k}, \Pi_h^{d,k},$ and $\Pi_h^{o,k}$
that are continuous for all functions in $L^2_r(\Omega)$ (or $\bL^2_r(\Omega)$) that satisfy the following properties:
\begin{enumerate}
\item \label{comm} {\em Commutativity.} The operators make the following diagram commute:
\begin{equation*}
\begin{CD}
L^2_r(\Omega)   @>\mathbf{grad}^k_{rz}>> \bL^2_r(\Omega)   @>\bcurl\nolimits^k\nolimits_{rz}>> \bL^2_r(\Omega)   @>\dive^k_{rz}>>   L^{2}_{r}(\Omega) \\ %\cap L^2_{1/r}(\Omega)  \\
@VV \Pi_h^{g,k} V                                   @VV \Pi_h^{c,k} V                           @VV \Pi_h^{d,k} V @ VV \Pi_h^{o,k} V  \\
A_h @> \grad_{rz}^k >> \bB_h @> \bcurl\nolimits^k\nolimits_{rz} >> \bC_h @> \dive_{rz}^k >> D_h
\end{CD}
\end{equation*}

\item \label{item:apprx} 
  {\em Approximation.} 
\begin{align*}
  \left\| u - \Pi^{g,k}_hu \right\|_{L^2_r(\Omega)}  &\leq C\inf_{u_h\in A_h}\left\| u - u_h \right\|_{L^2_r(\Omega)} , \\
  \left\| \bu - \Pi^{c,k}_h\bu \right\|_{L^2_r(\Omega)}  &\leq C\inf_{\bu_h\in \bB_h}\left\| \bu - \bu_h \right\|_{L^2_r(\Omega)} , \\ 
   \left\| \bu - \Pi^{d,k}_h\bu \right\|_{L^2_r(\Omega)}  &\leq C\inf_{\bu_h\in \bC_h}\left\| \bu - \bu_h \right\|_{L^2_r(\Omega)} , \\
   \left\| u - \Pi^{o,k}_h u \right\|_{L^2_r(\Omega)}  &\leq C\inf_{u_h\in D_h}\left\| u - u_h \right\|_{L^2_r(\Omega)} . 
\end{align*}
\end{enumerate}
\end{theorem}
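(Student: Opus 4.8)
\emph{Sketch of the approach.} All three assertions follow once the four operators are exhibited as \emph{smoothed} versions of the canonical degree-of-freedom interpolants. By \cite[Theorem 4.1]{O:2015}, the moment-based interpolation operators $I_h^{g,k},I_h^{c,k},I_h^{d,k},I_h^{o,k}$ onto $A_h,\bB_h,\bC_h,D_h$ already make the diagram in property~(1) commute; the trouble is that their defining moments (point values, tangential moments against $\bB_1$, normal moments against $\bC_1$, cell averages) are not continuous on all of $L^2_r(\Omega)$, and near the axis $\Gamma_0$ the degeneracy of the weight $r$ together with the interior $1/r$ terms in $\grad\nolimits_{rz}\nolimits^{k}$, $\bcurl\nolimits_{rz}\nolimits^{k}$, $\dive\nolimits_{rz}\nolimits^{k}$ makes matters delicate. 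The plan is therefore: (i) precompose the canonical interpolants with a weighted smoothing operator that is bounded on $L^2_r(\Omega)$ and commutes with the three $k$-modified operators; (ii) correct the composite on the finite element spaces to recover a genuine projection; (iii) read off the approximation estimates by the usual quasi-optimality argument.

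\emph{The smoothing operator (the crux).} I would construct a one-parameter family $R^{\cdot,k}_\epsilon$ of smoothing operators in the spirit of the Christiansen--Winther construction of bounded commuting projections, but adapted to the meridian half-plane and the measure $r\,drdz$. The cleanest route exploits the equivalence between the $k$-th Fourier mode of a field on $\Omega$ and a genuinely three-dimensional field on the axisymmetric solid $\breve\Omega$: because the rotation axis is an \emph{interior} subset of $\breve\Omega$, a rotation-equivariant mollification on $\breve\Omega$ --- composed with a small inward dilation so as to stay away from $\partial\breve\Omega$ --- is perfectly smooth there, intertwines with $\grad$, $\bcurl$, $\dive$, and descends to an operator on each Fourier mode that commutes with the corresponding $k$-modified operators and is bounded on $L^2_r(\Omega)$, $L^2_{r^3}(\Omega)$, and on their graph norms. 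Proving these weighted boundedness statements --- in particular checking that the dilation and mollification do not spoil the $1/r$-behaviour needed to feed $I_h^{\cdot,k}$ --- is the principal technical obstacle, and is precisely where the axisymmetric geometry enters. Setting $Q^{g,k}_\epsilon=I_h^{g,k}R^{g,k}_\epsilon$ and analogously for $c,d,o$, the composite diagram commutes because each factor does, and $Q^{\cdot,k}_\epsilon$ is $L^2_r(\Omega)$-bounded.

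\emph{Correction to a projection.} Tying $\epsilon$ to the meshsize ($\epsilon\simeq h$), a discrete stability estimate on the reference patch (scaling together with inverse inequalities, uniformly in $h$) shows that $Q^{\cdot,k}_\epsilon$ restricted to the matching finite element space is invertible, with inverse bounded independently of $h$. Define $\Pi^{\cdot,k}_h=\bigl(Q^{\cdot,k}_\epsilon|_{\text{finite element space}}\bigr)^{-1}Q^{\cdot,k}_\epsilon$. Each $\Pi^{\cdot,k}_h$ fixes its finite element space and is therefore a projection; it is $L^2_r(\Omega)$-bounded uniformly in $h$; and since both factors of the correction are built from operators commuting with $\grad\nolimits_{rz}\nolimits^{k}$, $\bcurl\nolimits_{rz}\nolimits^{k}$, $\dive\nolimits_{rz}\nolimits^{k}$, the corrected diagram still commutes, giving property~(1). (A parameter-free alternative is the Falk--Winther macroelement construction, but then the relevant local problems must be shown to be stable in the weighted norms.)

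\emph{Approximation.} For any $u_h$ in the relevant finite element space one has $u-\Pi^{\cdot,k}_hu=(I-\Pi^{\cdot,k}_h)(u-u_h)$, because $\Pi^{\cdot,k}_hu_h=u_h$; hence $\|u-\Pi^{\cdot,k}_hu\|_{L^2_r(\Omega)}\le(1+\|\Pi^{\cdot,k}_h\|)\,\|u-u_h\|_{L^2_r(\Omega)}$, and taking the infimum over $u_h$ and using the $h$-uniform bound on $\|\Pi^{\cdot,k}_h\|$ yields property~(2) with a constant independent of $h$. The only genuinely axisymmetric difficulty is the second step above: designing the weighted smoothing operator near the axis and proving both that it commutes with the $k$-dependent exterior-derivative operators and that it is bounded in the $r$-, $r^3$-, and $1/r$-weighted norms that the canonical interpolants require.
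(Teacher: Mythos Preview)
The paper does not actually prove this theorem: it is quoted verbatim from prior work, with the sentence ``The following theorem can be found in \cite[Theorem 5.1]{O:2019}'' serving as the entire justification. There is therefore no in-paper proof to compare against; your proposal is an attempt to reconstruct the argument of that reference.

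That said, your sketch is a faithful outline of the standard Sch\"oberl / Christiansen--Winther / Arnold--Falk--Winther construction of bounded cochain projections, and this is indeed the approach taken in \cite{O:2019}. The three steps --- smooth, compose with the canonical interpolants, then correct by inverting the restriction to the discrete space --- are exactly right, and the quasi-optimality argument for property~(2) is the usual one-line consequence of uniform boundedness plus the projection property. Your identification of the genuine difficulty (designing a smoothing operator that commutes with the $k$-modified differential operators and is bounded in the relevant weighted norms, particularly near $\Gamma_0$) is also accurate, and the idea of lifting to the three-dimensional solid $\breve\Omega$ so that the axis becomes an interior set is precisely the device used in \cite{O:2019} to sidestep the degeneracy of the weight. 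So as a proof sketch your proposal is sound; it simply reproduces, at the level of strategy, the content of the cited reference rather than anything argued in the present paper.
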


In this paper, we will mainly use the projection $\Pi^{d,k}_h$ onto $\bC_h$. While the original interpolation operator used in the construction of $\bC_h$ in \cite{O:2015}
satisfy error estimates, that interpolation operator requires more regularity on the function than necessary. Therefore, in the next section, we will construct another projection onto $\bC_h$ 
to show that
\[
\inf_{\bu_h\in \bC_h}\left\| \bu - \bu_h \right\|_{L^2_r(\Omega)} \leq Ch(|\bu_{rz}|^2_{H^1_r(\Omega)}+ \| ku_\theta -u_r \|_{\tilde{H}^1_r(\Omega)}).
\]

\section{Error Estimates for Commuting Projections} \label{concrete}

In this section, we will construct another interpolation operator onto $\bC_h$ to obtain better concrete error estimate for $\Pi^{d,k}_h$. 

The following result can be found in \cite[Proposition 3.3]{O:2015}. 
\begin{prop} \label{div_fem}

The following finite element $(\Sigma^k, K, P^k)$ is unisolvent and conforming in $\bH_r(\dive^k, \Omega)$.

\begin{itemize}

\item $K$: triangle with vertices $a_i$, edges $e_i$, and normal vectors $\bn_i$, $1\leq i\leq 3$.

\item $P^k$: space of polynomials defined by 
$$ \bu=    \begin{pmatrix}
       k\gamma_1 + \gamma_2 r \\
       \gamma_1 + \gamma_3 r \\
       \gamma_4 + \gamma_2 z
     \end{pmatrix}. 
$$

\item $\Sigma^k$: set of linear forms, for $1\leq i \leq 3$
\begin{align*}
\sigma_{K} &: \bu \mapsto \dfrac{1}{|K|}\int_K \dfrac{ku_\theta - u_r}{r},  \\ 
\sigma_{e_i} &:        \bu \mapsto \int_{e_i} 
     \begin{pmatrix}
       u_r \\
       u_z     
       \end{pmatrix} \cdot \bn_i.  
\end{align*}
\end{itemize}
\end{prop}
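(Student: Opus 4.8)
The plan is to treat the two claims separately: unisolvence by a reduction to the lowest-order Raviart--Thomas element, and conformity by a jump analysis across interelement edges. The preliminary observation I would record is that on $P^k$ the superficially singular quantities collapse to constants. Writing $\bu=(k\gamma_1+\gamma_2 r,\ \gamma_1+\gamma_3 r,\ \gamma_4+\gamma_2 z)^T$, one reads $\gamma_1,\gamma_3$ off $u_\theta$ and $\gamma_2,\gamma_4$ off $u_z$, so $\dim P^k=4$, which matches the number of functionals in $\Sigma^k$; unisolvence therefore reduces to showing that $\bu\in P^k$ with all four functionals vanishing must be $\bu=0$. Moreover $\bu_{rz}=(u_r,u_z)^T=(k\gamma_1,\gamma_4)^T+\gamma_2(r,z)^T$ is a classical lowest-order Raviart--Thomas field on $K$ in the $(r,z)$-plane, independent of $\gamma_3$, while $\frac{ku_\theta-u_r}{r}=k\gamma_3-\gamma_2$ is a constant; hence $\sigma_K(\bu)=k\gamma_3-\gamma_2$ and $\dive\nolimits^{k}_{rz}\bu=3\gamma_2-k\gamma_3$ is piecewise constant.

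For unisolvence, suppose $\sigma_K(\bu)=0$ and $\sigma_{e_i}(\bu)=0$ for $i=1,2,3$. The three edge functionals are exactly the standard $\mathrm{RT}_0$ degrees of freedom applied to $\bu_{rz}$, and since the edge tangent is orthogonal to $\bn_i$ the flux $\bu_{rz}\cdot\bn_i$ is constant along $e_i$, hence identically zero. The divergence theorem in the $(r,z)$-plane then gives $2\gamma_2|K|=\int_K\dive\nolimits_{rz}\bu_{rz}\,drdz=\int_{\partial K}\bu_{rz}\cdot\bn\,ds=0$, so $\gamma_2=0$; consequently $\bu_{rz}\equiv(k\gamma_1,\gamma_4)^T$ is a constant vector orthogonal to three edge normals, any two of which are linearly independent, so $k\gamma_1=\gamma_4=0$ and then $\gamma_1=0$ since $k\neq0$. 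Finally $\sigma_K(\bu)=k\gamma_3-\gamma_2=k\gamma_3=0$ forces $\gamma_3=0$, whence $\bu=0$.

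For conformity, $\bu|_K\in P^k$ immediately gives $\bu\in\bL^2_r(\Omega)$ and a piecewise-constant $\dive\nolimits^{k}_{rz}\bu\in L^2_r(\Omega)$, so the only remaining point is that the elementwise divergence agrees with the distributional one. I would use the weighted integration by parts that makes $-\dive\nolimits^{k}_{rz}$ and $\grad\nolimits^{k*}_{rz}$ formal adjoints (\cite[Theorem 7.1]{O:2019}), which on a single element $K$ produces the boundary term $\int_{\partial K}r\,(\bu_{rz}\cdot\bn)\,\phi\,ds$. Summing over $\TT_h$ against a smooth compactly supported $\phi$, the boundary contributions vanish and the interior ones collapse to $\sum_{e}\int_e r\,[\![\bu_{rz}\cdot\bn]\!]\,\phi\,ds$ over interior edges $e$; since $\bu_{rz}\cdot\bn$ is constant on each edge, equal to $|e|^{-1}\sigma_e(\bu)$, single-valuedness of the edge functionals across a shared edge makes every jump vanish, so the elementwise $\dive\nolimits^{k}_{rz}\bu$ is the distributional divergence and the assembled space --- namely $\bC_h$ of (\ref{fem_spaces}) --- lies in $\bH_r(\dive\nolimits^k,\Omega)$.

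The step I expect to be the main obstacle is the conformity argument, specifically pinning down the correct interelement continuity in the weighted setting: one must verify that the $rdrdz$-weighted integration by parts produces exactly the boundary term $\int_{\partial K}r\,(\bu_{rz}\cdot\bn)\,\phi\,ds$, so that the decisive quantity is the ordinary in-plane normal trace and, being constant along each edge, is pinned down by the single scalar functional $\sigma_{e_i}$; one must also check that the $1/r$ appearing in $\dive\nolimits^{k}_{rz}$ and in $\sigma_K$ causes no trouble --- precisely because $(u_r-ku_\theta)/r$ is genuinely constant on $P^k$, nothing singular develops at the rotation axis. Unisolvence, by contrast, is a short reduction to the classical $\mathrm{RT}_0$ fact plus one scalar equation once the $\gamma$-parametrization is unwound.
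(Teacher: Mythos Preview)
The paper does not give its own proof of this proposition; it simply cites \cite[Proposition 3.3]{O:2015} and states the result. Your proposal is therefore not competing with an in-paper argument but supplying one where the paper defers to a reference.

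Your proof is correct. The unisolvence part is exactly the natural reduction: $\bu_{rz}=(k\gamma_1+\gamma_2 r,\ \gamma_4+\gamma_2 z)^T$ is a lowest-order Raviart--Thomas field, the three edge functionals $\sigma_{e_i}$ are precisely its standard degrees of freedom, and $\sigma_K(\bu)=k\gamma_3-\gamma_2$ then fixes the remaining parameter $\gamma_3$ once $\gamma_2$ is known (using $k\neq 0$). The observation that $(ku_\theta-u_r)/r=k\gamma_3-\gamma_2$ is genuinely constant on $P^k$ is the key fact that makes both $\sigma_K$ and $\dive\nolimits^k_{rz}\bu$ well-defined and bounded all the way to the axis $\Gamma_0$.

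For conformity, your weighted integration-by-parts argument is the right one: on each element the identity $(\bu,\grad\nolimits^{k*}_{rz}\phi)_r = -(\dive\nolimits^k_{rz}\bu,\phi)_r + \int_{\partial K} r\,(\bu_{rz}\cdot\bn)\,\phi\,ds$ holds for smooth $\phi$, the $\theta$-component contributes no boundary term, and since $\bu_{rz}\cdot\bn$ is constant along each edge (a standard $\mathrm{RT}_0$ fact), matching the scalar $\sigma_{e_i}$ across a shared edge forces the jump $[\![\bu_{rz}\cdot\bn]\!]$ to vanish. That is precisely the interelement continuity needed for membership in $\bH_r(\dive\nolimits^k,\Omega)$. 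Your self-identified ``main obstacle'' is in fact routine once one writes the boundary term out; the argument is essentially the classical $\mathrm{RT}_0$ conformity proof with the harmless weight $r$ carried along.
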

Notice that $\bu_{rz}=(u_r,u_z)^T$ is being projected onto the lowest order Raviart Thomas space \cite{RT:1977},
while $\dfrac{ku_\theta-u_r}{r}$ is being projected onto the piecewise constant space. 
The degrees of freedom used above to do so does not take into consideration that the functions that we are applying the corresponding interpolation operator (denoted by $I_h^{d,k}$)   
come from weighted function spaces. Therefore, the error estimates one gets for $I_h^{d,k}$ in \cite[Theorem 4.1]{O:2015} requires $u_r, u_z$, and $\dfrac{ku_\theta - u_r}{r}$ to be in
the space $H^2_r(\Omega)$ which is known to be continuously embedded in $L^2(\Omega)$ \cite{MR:1982}. %This is also the case for the interpolation operator  constructed in \cite{Lacoste:2000}. 

In \cite[Lemma 5.3]{CGP:2008} an interpolation operator onto the lowest order \Nedelec space  \cite{fem:nedelec} that is continuous on $\bH^1_r(\Omega)$ was constructed,
and a projection onto the same space that is continuous on $\bL_r^2(\Omega)$ was constructed in \cite{GO:2012} as well.
Employing a similar idea, one might consider taking the $L^2_r$-orthogonal projection of $\dfrac{ku_\theta-u_r}{r}$ instead of the standard $L^2$-orthogonal projection in Proposition~\ref{div_fem},
but this does not solve the issue of the regularity condition being posed on  $\dfrac{ku_\theta-u_r}{r}$ instead of $ku_\theta-u_r$ for example.
As demonstrated in the proof of the following theorem, the trick is to use the weight $r^3$ instead of the usual weight $r$.
This is because as in Proposition~\ref{div_fem}, a projection will be applied to  $\dfrac{ku_\theta - u_r}{r}$ that involves a 
$\dfrac{1}{r}$-terms and 
\[
v\in L^2_r(\Omega) \quad\tn{ if and only if }\quad \dfrac{v}{r}\in L^2_{r^3}(\Omega).
\]

Let $\bH_{r,k}^{1\star}(\Omega)$ denote the space $\{ \bu\in \bH^1_r(\Omega): ku_\theta -u_r\in L^2_{1/r}(\Omega) \}$.
Then we have the following theorem.
\begin{theorem} \label{concreteD} For all $\bu\in \bH^{1\star}_r(\Omega)$, the commuting projection $\Pi^{d,k}_h$ satisfies
\[
\| \bu - \Pi^{d,k}_h\bu \|_{L^2_r(\Omega)}  \leq Ch(|\bu_{rz}|^2_{H^1_r(\Omega)}+ \| ku_\theta -u_r \|_{\tilde{H}^1_r(\Omega)}).
\]
\end{theorem}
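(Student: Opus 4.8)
The plan is to reduce the estimate to a best--approximation bound and then produce a concrete good element of $\bC_h$ by constructing a new interpolation operator $\Kt^{d,k}_h:\bH^{1\star}_r(\Omega)\to\bC_h$. By the approximation property of Theorem~\ref{ProjSet},
\[
\| \bu - \Pi^{d,k}_h\bu \|_{L^2_r(\Omega)} \le C \inf_{\bu_h\in\bC_h}\| \bu - \bu_h \|_{L^2_r(\Omega)} \le C\, \| \bu - \Kt^{d,k}_h\bu \|_{L^2_r(\Omega)},
\]
so it suffices to define $\Kt^{d,k}_h$ and to show $\|\bu-\Kt^{d,k}_h\bu\|_{L^2_r(\Omega)}\le Ch\bigl(|\bu_{rz}|_{H^1_r(\Omega)}+\|ku_\theta-u_r\|_{\tilde H^1_r(\Omega)}\bigr)$. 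Following the structure of the degrees of freedom in Proposition~\ref{div_fem}, $\Kt^{d,k}_h\bu$ will be built from two independent pieces: the $(r,z)$--part, which must land in the lowest order Raviart--Thomas space $RT_0(\TT_h)$, and a per--element constant $c_K$ in the role of the interior functional $\sigma_K$, which through the reconstruction in $\bC_1$ then fixes the $\theta$--component.

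For the $(r,z)$--part I would set $(\Kt^{d,k}_h\bu)_{rz}:=\rho_h\bu_{rz}$, where $\rho_h:\bH^1_r(\Omega)\to RT_0(\TT_h)$ is a weighted--Sobolev--stable (quasi-)interpolation onto the globally conforming lowest order Raviart--Thomas space, continuous on $\bH^1_r(\Omega)$ and satisfying $\|\bv-\rho_h\bv\|_{L^2_r(\Omega)}\le Ch|\bv|_{H^1_r(\Omega)}$; such an operator can be obtained by the same weighted regularization technique used for the lowest order \Nedelec space in \cite{CGP:2008} and \cite{GO:2012} (or isolated as a lemma in the Appendix). Since $\rho_h\bu_{rz}$ preserves the inter--element normal continuity, the function reconstructed below is automatically $\bH_r(\dive^k,\Omega)$--conforming, hence lies in $\bC_h$. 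For the interior part write $w:=ku_\theta-u_r$; since $\bu\in\bH^1_r(\Omega)$ we have $w\in L^2_r(\Omega)$, equivalently $w/r\in L^2_{r^3}(\Omega)$, and I would take $c_K$ to be the $L^2_{r^3}(K)$--weighted average of $w/r$. This is the essential change from Proposition~\ref{div_fem}: the functional $\sigma_K$ there averages $w/r$ against the weight $r$, which forces a regularity assumption on $w/r$ rather than on $w$, whereas the $L^2_{r^3}(K)$--average only sees $w$ itself through the identity $v\in L^2_r(\Omega)\iff v/r\in L^2_{r^3}(\Omega)$. By unisolvence of $(\Sigma^k,K,P^k)$ the pair $(\rho_h\bu_{rz},\{c_K\})$ determines $\Kt^{d,k}_h\bu\in\bC_h$ uniquely; note in particular that $(\Kt^{d,k}_h\bu)_r$ equals the first component of $\rho_h\bu_{rz}$, while $k(\Kt^{d,k}_h\bu)_\theta-(\Kt^{d,k}_h\bu)_r=c_K r$ on each $K$.

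It then remains to estimate $\|\bu-\Kt^{d,k}_h\bu\|^2_{L^2_r(\Omega)}=\|\bu_{rz}-\rho_h\bu_{rz}\|^2_{L^2_r(\Omega)}+\|u_\theta-(\Kt^{d,k}_h\bu)_\theta\|^2_{L^2_r(\Omega)}$. The first term is $\le Ch^2|\bu_{rz}|^2_{H^1_r(\Omega)}$ directly from the properties of $\rho_h$. For the second term, on each $K$ I would use $k\bigl(u_\theta-(\Kt^{d,k}_h\bu)_\theta\bigr)=(w-c_Kr)+(u_r-(\Kt^{d,k}_h\bu)_r)$ together with $|k|\ge 1$ to get $\|u_\theta-(\Kt^{d,k}_h\bu)_\theta\|_{L^2_r(K)}\le\|w-c_Kr\|_{L^2_r(K)}+\|\bu_{rz}-\rho_h\bu_{rz}\|_{L^2_r(K)}$. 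The weight shift is the point: $\|w-c_Kr\|^2_{L^2_r(K)}=\int_K(w-c_Kr)^2 r\,drdz=\int_K(w/r-c_K)^2 r^3\,drdz=\|w/r-c_K\|^2_{L^2_{r^3}(K)}$, and since $c_K$ is the $L^2_{r^3}(K)$--average of $w/r$, a weighted Bramble--Hilbert/Poincar\'e estimate in $L^2_{r^3}$ (by affine scaling to a reference configuration together with a weighted Deny--Lions argument, uniform over the shape--regular family and in particular for elements abutting $\Gamma_0$) gives $\|w/r-c_K\|_{L^2_{r^3}(K)}\le Ch_K|w/r|_{H^1_{r^3}(K)}$. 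Finally, from $\gradrz(w/r)=(\gradrz w)/r-(w/r^2)(1,0)^T$ and the elementary weight identities $\|(\gradrz w)/r\|_{L^2_{r^3}(K)}=\|\gradrz w\|_{L^2_r(K)}$ and $\|w/r^2\|_{L^2_{r^3}(K)}=\|w\|_{L^2_{1/r}(K)}$ one obtains $|w/r|_{H^1_{r^3}(K)}\le C\bigl(\|\gradrz w\|_{L^2_r(K)}+\|w\|_{L^2_{1/r}(K)}\bigr)$. Summing over $K\in\TT_h$, combining with the first term, and recalling $w=ku_\theta-u_r$ yields the stated bound.

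The \emph{main obstacle} is the construction in the second paragraph of a Raviart--Thomas (quasi-)interpolant that is continuous on the weighted space $\bH^1_r(\Omega)$ with first order approximation, correctly handling the degeneration of the weight $r$ at the axis $\Gamma_0$ — the canonical Raviart--Thomas interpolant built from edge fluxes is not well defined on $\bH^1_r(\Omega)$ for edges lying on $\Gamma_0$. A companion technical point is the uniform weighted Poincar\'e inequality in $L^2_{r^3}$ on axis--touching triangles. Both are of a kind that is by now standard in the analysis of axisymmetric finite elements, so I would either cite such results or record their proofs in the Appendix.
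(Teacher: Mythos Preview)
Your proposal is correct and follows essentially the same approach as the paper: reduce to best approximation via Theorem~\ref{ProjSet}, then build an interpolant whose $(r,z)$--part is the weighted Raviart--Thomas operator of \cite[Lemma~5.3]{CGP:2008} and whose interior functional is the $L^2_{r^3}$--average of $(ku_\theta-u_r)/r$, and estimate the $\theta$--component via the same weight shift $\|w-c_Kr\|_{L^2_r(K)}=\|w/r-c_K\|_{L^2_{r^3}(K)}$ followed by a weighted Poincar\'e inequality in $L^2_{r^3}$ (the paper's (\ref{r3_1}), obtained from \cite[Lemma~5]{BBD:2006}) and the identity $|w/r|_{H^1_{r^3}(K)}\le\|w\|_{\tilde H^1_r(K)}$. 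The two obstacles you flag are exactly what the paper handles by citing \cite{CGP:2008} and \cite{BBD:2006}.
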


\begin{proof}
By Theorem~\ref{ProjSet} item~(\ref{item:apprx}), it suffices to prove that
\[
\inf_{\bu_h\in \bC_h}\left\| \bu - \bu_h \right\|_{L^2_r(\Omega)} \leq Ch(|\bu_{rz}|^2_{H^1_r(\Omega)}+ \| ku_\theta -u_r \|_{\tilde{H}^1_r(\Omega)}).
\]
%We use $\bu_{rz}$ to denote $(u_r,u_z)^T$.

%\[
 %\left\| u - \Pi^{o,k}_h u \right\|_{L^2_r(\Omega)}  \leq Ch|u|_{H^1_r(\Omega)}
%\]
%is clear by setting $u_h$ to be the $L^2_r$-orthogonal projection of $u$ onto $D_h$.

Define the set of global degrees of freedom of $\bC_h$ as 
\begin{equation} \label{newC2}
\bar{\sigma}_K (\bu) = \dfrac{\int_K r^3 \cdot \dfrac{ku_\theta - u_r}{r}  dA}{  \int_K r^3 dA} \quad\quad\tn{ for all mesh triangles } K
\end{equation}
and
\begin{equation} \label{newC}
\begin{aligned}
\bar{\sigma}_{\ba}(\bu) &=  \int_{e(\ba)} \bu_{rz}\cdot\bn r ds &&\tn{ for all mesh vertices } \ba \in\bar{\Gamma}_0, \\
\bar{\sigma}_e (\bu) &= \int_e \bu_{rz}\cdot\bn ds &&\tn{ for all mesh edges } e \cap \bar{\Gamma}_0 = \emptyset , \\
\bar{\sigma}_{K_0} (\bu) &= \int_{K_0} r \dive\nolimits_{rz} \bu_{rz}  dA &&\tn{ for all mesh triangles } K_0 \cap \bar{\Gamma}_0 \ne \emptyset,
\end{aligned}
\end{equation}
where $e(\ba)$ is an edge associated with a vertex $\ba$ on $\Gamma_0$ that is not on $\Gamma_0$, and
\[
\dive\nolimits_{rz} \bu = \partial_r u_r + \partial_z u_z.
\]
Note that the degrees of freedom (\ref{newC}) are the same as the ones used in \cite[Lemma 5.3]{CGP:2008} for the lowest order \Nedelec space 
when viewing the lowest order Raviart Thomas space as the rotated lowest order \Nedelec space. 
The linear functionals (\ref{newC}) are continuous linear functionals on $\bH^1_r(\Omega)$ and form unisolvent degrees of freedom for the lowest order Raviart Thomas space \cite[Proposition 5.2]{CGP:2008}.

We clarify the difference between the degrees of freedom (\ref{newC2}) and (\ref{newC}) and the ones used in Proposition~\ref{div_fem}.
First of all, in (\ref{newC2}), the $L^2_{r^3}$-orthogonal projection of $\dfrac{kv_\theta - v_r}{r}$ is used while
the $L^2$-orthogonal projection of $\dfrac{kv_\theta - v_r}{r}$ is used in Proposition~\ref{div_fem}. Secondly, in (\ref{newC}), the degrees of freedom used in \cite[Lemma 5.3]{CGP:2008} are used for $\bu_{rz}$ 
instead of the standard non-weighted lowest order Raviart-Thomas degrees of freedom are used in Proposition~\ref{div_fem}.

We define $\tilde{\Pi}^{d,k}_h: \bH_{r}(\dive^n,\Omega) \rightarrow \bC_h$ as
\[
\bar{\sigma} (\tilde{\Pi}^{d,k}_h\bv) = \bar{\sigma}(\bv) 
\] 
for all degrees of freedom $\bar{\sigma}$ in (\ref{newC2}) and (\ref{newC}). In other words, for each triangle $K\in\TT_h$, 
\begin{equation}  \label{pi_div}
\begin{aligned} 
            \tilde{\Pi}^{d,k}_h \bv \begin{pmatrix}
       v_r \\
       v_\theta \\
       v_z          
     \end{pmatrix} |_K &=
     \bar{\sigma}_K (\bv)
     \begin{pmatrix}
        0 \\
        \dfrac{r}{k}\chi_K \\
        0          
     \end{pmatrix}  +
     \sum_{i=1}^3
\bar{\sigma}_i (\bv) 
         \begin{pmatrix}
        \xi_i^r \\
        \dfrac{1}{k}\xi_i^r \\
        \xi_i^z    
       \end{pmatrix},
\end{aligned}
\end{equation}
where $\bar{\sigma}_i$ denotes the local degrees of freedom corresponding to (\ref{newC}),
$\bxi_i=(\xi_i^r,\xi_i^z)^T$ is the local basis for the lowest order Raviart Thomas space associated with $e_i$, and 
$\chi_K$ denotes the constant function one on triangle $K$.

For any edge $e$ in $\TT_h$, let $\Delta_e$ denote the union of two triangles that share $e$ as a common edge, and let $h_e$ denote its diameter.
Then from \cite[Lemma 5]{BBD:2006} we get
\[
\inf_{q \in P_0(\Delta_e)} \|v-q\|_{L^2_{r}(\Delta_e)} \leq Ch_e|v|_{H^1_{r}(\Delta_e)},
\]
where $P_0=\{a: a \in \RRR \}$.
%and this remains true when $\Delta_e$ is replaced by any triangle $K$ in $\TT_h$.
By replacing $r$ by $r^3$ in the proof of \cite[Lemma 5]{BBD:2006} before applying Young's inequality, 
this result extends to the following:
\begin{equation} \label{r3}
\inf_{q \in P_0(\Delta_e)} \|v-q\|_{L^2_{r^3}(\Delta_e)} \leq Ch_e|v|_{H^1_{r^3}(\Delta_e)}.
\end{equation}
As in \cite[Lemma 5]{BBD:2006}, this remains true when $\Delta_e$ is replaced by any triangle $K$ as well:
\begin{equation} \label{r3_1}
\inf_{q \in P_0(K)} \|v-q\|_{L^2_{r^3}(K)} \leq Ch_K|v|_{H^1_{r^3}(K)},
\end{equation}
where $h_K$ is the diameter of $K$.

Next, we say that a triangle is type 1 or type 2 if it intersects $\Gamma_0$ at one vertex or two vertices resepctively, and
we call a triangle type 3 if it does not intersect $\Gamma_0$ at all. 
If $K$ is a type 1 triangle, let $D_K$ denote the union of all triangles connected to the one vertex of $K$ that is on $\Gamma_0$.
If $K$ is of type 2, then $D_K$ denotes the union of the two vertex patches of the two vertices of $K$ that is on $\Gamma_0$.
For type 3 triangles, $D_K$ is simply equal to $K$. %With this notation, and using 
%all triangles connected to the one vertex of $K$ that is on $\Gamma_0$.
Then, for any triangle $K\in\TT_h$, we have
\begin{equation} \label{rrr3}
\begin{aligned}
\left\| \bv - \tilde{\Pi}_h^{d,k}\bv \right\|_{r,K}^2
&= \left\| \bv_{rz} -   \sum_{i=1}^3\bar{\sigma}_i (\bv) \bxi_i  \right\|^2_{r,K} 
+ \left\| v_\theta -  \bar{\sigma}_K (\bv) \dfrac{r}{k}\chi_K -  \sum_{i=1}^3\bar{\sigma}_i (\bv) \dfrac{1}{k}\xi_i^r  \right\|_{r,K}^2, \\
&\leq C (h^2|\bv_{rz}|^2_{H^1_r(D_K)} 
+ \left\| \dfrac{1}{k}(kv_\theta -  \bar{\sigma}_K (\bv) r\chi_K) -   \dfrac{1}{k}\sum_{i=1}^3\bar{\sigma}_i (\bv)\xi_i^r  \right\|_{r,K}^2)  \quad \tn{ by \cite[Lemma 5.3]{CGP:2008}, }\\
&\leq C (h^2|\bv_{rz}|^2_{H^1_r(D_K)} 
+ \left\| (kv_\theta -v_r)  -  \bar{\sigma}_K (\bv) r\chi_K + v_r -  \sum_{i=1}^3\bar{\sigma}_i (\bv) \xi_i^r  \right\|_{r,K}^2), \\
&\leq C (h^2|\bv_{rz}|^2_{H^1_r(D_K)} 
+ \left\| (kv_\theta -v_r)  -  \bar{\sigma}_K (\bv) r\chi_K \right\|_{r,K}^2),\\
&= C (h^2|\bv_{rz}|^2_{H^1_r(D_K)} 
+ \left\| r(\dfrac{kv_\theta -v_r}{r}  -  \bar{\sigma}_K (\bv) \chi_K) \right\|_{r,K}^2), \\
&= C (h^2|\bv_{rz}|^2_{H^1_r(D_K)} 
+ \left\| \dfrac{kv_\theta -v_r}{r}  -  \bar{\sigma}_K (\bv) \chi_K \right\|_{L^2_{r^3}(K)}^2), \\
&\leq C(h^2|\bv_{rz}|^2_{H^1_r(D_K)}  + Ch^2|\dfrac{kv_\theta -v_r}{r}|^2_{H^1_{r^3}(K)}) \quad\tn{ by (\ref{r3_1}). }
\end{aligned}
\end{equation}
%for any $\Delta_e$ containing $K$.
The last inequality is using the fact that 
the error of the $L^2_{r^3}$-orthogonal projection is bounded by the best approximation error in the $L_{r^3}^2$-norm.

Direct calculation shows that
\begin{equation} \label{r4}
|\dfrac{kv_\theta -v_r}{r}|_{H^1_{r^3}(K)} \leq  \| kv_\theta -v_r \|_{\tilde{H}^1_r(K)}.
\end{equation}
Therefore, by (\ref{rrr3}) and (\ref{r4}), and summing over all triangles as usual, we conclude that
\begin{equation} \label{Pi3Est}
\left\| \bv - \tilde{\Pi}_h^{d,k}\bv \right\|_{L^2_r(\Omega)} \leq Ch(|\bv_{rz}|_{H^1_r(\Omega)}+ \| kv_\theta -v_r \|_{\tilde{H}^1_r(\Omega)}).
\end{equation}
%Hence
%\[
%\inf_{\bu_h\in \bC_h}\left\| \bu - \bu_h \right\|_{L^2_r(\Omega)} \leq Ch(|\bu_{rz}|^2_{H^1_r(\Omega)}+ \| ku_\theta -u_r \|_{\tilde{H}^1_r(\Omega)}),
%\]
%and this completes the proof.
%Hence, by (\ref{Pi3Est}) and Theorem~\ref{ProjSet} item~\ref{item:apprx}, we have
%\begin{equation} \label{CommPi3Est}
%\left\| \bv - \Pi^{d,k}\bv \right\|_{L^2_r(\Omega)} \leq Ch(|\bv_{rz}|_{H^1_r(\Omega)}+ \| kv_\theta -v_r \|_{\tilde{H}^1_r(\Omega)}).
%\end{equation}

\end{proof}

Next, let 
\[
\bH_{r,k}^{1\diamond}(\Omega)=\{\bv\in \bH^1_r(\Omega): kv_z, kv_r+v_\theta \in L^2_{1/r}(\Omega) \}.
\]
Then, by using a similar idea as in the proof of Theorem~\ref{concreteD} and Cl\'ement operators \cite{Clement:1975, MR:1982, BBD:2006}, 
we get the following error estimate for the commuting projector $\Pi_h^{c,k}$. 
\begin{theorem} \label{cc}
For all $\bv\in \bH_{r,k}^{1\diamond}(\Omega)$ we have 
\[
\left\| \bv - \Pi_h^{c,k}\bv \right\|_{L^2_r(\Omega)} \leq Ch(|v_\theta|_{H^1_r(\Omega)} + \|kv_z\|_{\tilde{H}^1_r(\Omega)}  +  \|kv_r+v_\theta\|_{\tilde{H}^1_r(\Omega)}).
\]
\end{theorem}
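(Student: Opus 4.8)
The plan is to follow the strategy of the proof of Theorem~\ref{concreteD}. By Theorem~\ref{ProjSet}~item~(\ref{item:apprx}) it suffices to exhibit one element of $\bB_h$ that approximates $\bv$ to first order in $\| \cdot \|_{L^2_r(\Omega)}$, and I would obtain it as $\tilde{\Pi}^{c,k}_h\bv$ for an interpolation operator $\tilde{\Pi}^{c,k}_h \colon \bH_{r,k}^{1\diamond}(\Omega) \to \bB_h$ built in the same spirit as the operator $\tilde{\Pi}^{d,k}_h$ of Section~\ref{concrete}. The design principle is the one used there: the degrees of freedom of $\bB_1$ that involve a $1/r$ factor see $\bv$ only through the combinations $\tfrac1r(kv_r+v_\theta)$ and $\tfrac1r\,kv_z$, each of which is an honest polynomial on a $\bB_1$-element (lying in the span of $\{1,z\}$ and of $\{1,r\}$ respectively), so those functionals must be replaced by $L^2_{r^3}$-weighted local averages of Cl\'ement type \cite{Clement:1975,MR:1982,BBD:2006}; the remaining functionals, which control the in-plane tangential traces of $\bv_{rz}$, are replaced by the weighted edge functionals of \cite[Lemma 5.3]{CGP:2008} (with weight $r$ on edges meeting $\Gamma_0$ and the divergence-type correction on triangles touching the axis, as in (\ref{newC})). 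The first task is to verify that these modified functionals are unisolvent on $\bB_1$, continuous on $\bH_{r,k}^{1\diamond}(\Omega)$, and assemble into a map onto the conforming global space $\bB_h$ of \cite{O:2015,O:2019}.

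I would then write $\tilde{\Pi}^{c,k}_h\bv$ explicitly on a triangle $K$ using the dual local basis, as in (\ref{pi_div}), and split
\[
\| \bv - \tilde{\Pi}^{c,k}_h\bv \|_{L^2_r(K)}^2 = \| \bv_{rz} - (\tilde{\Pi}^{c,k}_h\bv)_{rz} \|_{L^2_r(K)}^2 + \| v_\theta - (\tilde{\Pi}^{c,k}_h\bv)_\theta \|_{L^2_r(K)}^2 .
\]
For the in-plane term I would use \cite[Lemma 5.3]{CGP:2008} together with the near-axis estimate (\ref{r3_1}) to get the bound $Ch^2\,|\bv_{rz}|_{H^1_r(D_K)}^2$, and then rewrite $v_r = \tfrac1k\bigl((kv_r+v_\theta)-v_\theta\bigr)$ and $v_z = \tfrac1k(kv_z)$; since $k$ is a fixed nonzero integer the factors $1/k$ are absorbed into $C$, which turns $|\bv_{rz}|_{H^1_r(D_K)}$ into the quantities $|v_\theta|_{H^1_r(D_K)}$, $\|kv_z\|_{\tilde{H}^1_r(D_K)}$ and $\|kv_r+v_\theta\|_{\tilde{H}^1_r(D_K)}$. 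For the azimuthal term I would repeat the chain (\ref{rrr3}): add and subtract $kv_r$ together with the relevant edge-basis contribution so that $v_\theta - (\tilde{\Pi}^{c,k}_h\bv)_\theta$ becomes $(kv_r+v_\theta)$ minus a piecewise polynomial, up to a term already controlled by the in-plane estimate, factor out an $r$ to pass from the weight $r$ to the weight $r^3$, apply the $L^2_{r^3}$ best-approximation estimate (\ref{r3_1}), and finish with the elementary bound behind (\ref{r4}), now in the form $|\tfrac1r(kv_r+v_\theta)|_{H^1_{r^3}(K)} \le \|kv_r+v_\theta\|_{\tilde{H}^1_r(K)}$ and likewise for $kv_z$. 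Summing over the triangles and using the finite overlap of the patches $D_K$ yields the claimed estimate.

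The step I expect to be the main obstacle is the algebraic bookkeeping in the azimuthal term. Unlike the divergence element of Proposition~\ref{div_fem}, where the $\theta$-component of each local basis function is a clean scalar multiple of its in-plane part, in $\bB_1$ the parameters $\beta_1,\beta_3,\beta_6$ are shared among $v_r$, $v_\theta$ and $v_z$, so the edge-based and Cl\'ement-based contributions to $(\tilde{\Pi}^{c,k}_h\bv)_\theta$ are genuinely coupled and must be recombined carefully, so that every surviving piece is one of the non-singular combinations $kv_r+v_\theta$, $kv_z$, or the plain $v_\theta$, with no leftover $|v_r|_{H^1_r}$ or $|v_z|_{H^1_r}$ term (both of which are deliberately absent from the right-hand side of the theorem). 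A secondary technical point, as already in Theorem~\ref{concreteD}, is the behaviour at the axis: one must check that on type~1 and type~2 triangles the weighted edge functionals of \cite{CGP:2008} and the $L^2_{r^3}$-averages are jointly well defined and continuous on $\bH_{r,k}^{1\diamond}(\Omega)$, and that the overlap constant of the enlarged patches $D_K$ stays bounded under the shape regularity of $\TT_h$; these parallel the corresponding steps in Section~\ref{concrete} and should go through with the same tools.
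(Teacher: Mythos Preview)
Your overall strategy---reduce to the best-approximation bound via Theorem~\ref{ProjSet}, then build a weighted interpolant $\tilde{\Pi}^{c,k}_h$ using an $L^2_{r^3}$-Cl\'ement device for the $1/r$ combinations---is exactly right. But the way you split the degrees of freedom of $\bB_1$ is the wrong one, and the proof will not close as written.

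For $\bu\in\bB_1$ one has $u_\theta=-k\beta_1+\beta_2 r-k\beta_3 z\in P_1$, while the pair
\[
\Bigl(\tfrac{ku_r+u_\theta}{r},\ \tfrac{ku_z}{r}\Bigr)=\bigl((k\beta_4+\beta_2)-k\beta_6 z,\ k\beta_5+k\beta_6 r\bigr)
\]
lies exactly in the lowest-order N\'ed\'elec space $ND_1$. By contrast, $\bu_{rz}=(u_r,u_z)$ does \emph{not} lie in $ND_1$ (it contains $r$, $rz$, $r^2$ terms), so the edge functionals of \cite[Lemma~5.3]{CGP:2008} cannot be used on $\bv_{rz}$, and your proposed set of functionals is not unisolvent on $\bB_1$. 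In the paper the roles are therefore reversed from what you suggest: the three \emph{nodal} Cl\'ement functionals of \cite{BBD:2006} (with weight $r$) are applied to the scalar $v_\theta$, and the three \emph{edge} functionals are applied to the vector $\bigl(\tfrac{kv_r+v_\theta}{r},\tfrac{kv_z}{r}\bigr)$, via a new $L^2_{r^3}$-weighted Cl\'ement-type operator $\Pi_h:[L^2_{r^3}]^2\to\bN_h$ for which one first proves
\[
\| \tilde{\bv}-\Pi_h\tilde{\bv}\|_{L^2_{r^3}(K)}\le Ch\,|\tilde{\bv}|_{H^1_{r^3}(\Delta_K)}.
\]
This is the genuinely new lemma here (the analogue of \eqref{r3_1} for the N\'ed\'elec element rather than for constants), and it is what you are missing. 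With this choice the error splits as
\[
\|v_\theta-\bar{\Pi}_h v_\theta\|_{L^2_r(K)}^2
+\tfrac{1}{k^2}\Bigl\|\bigl(\tfrac{kv_r+v_\theta}{r},\tfrac{kv_z}{r}\bigr)-\Pi_h\bigl(\tfrac{kv_r+v_\theta}{r},\tfrac{kv_z}{r}\bigr)\Bigr\|_{L^2_{r^3}(K)}^2
+\text{(cross term already controlled)},
\]
and the ``algebraic bookkeeping'' difficulty you anticipate disappears: no $|\bv_{rz}|_{H^1_r}$ ever appears, only $|v_\theta|_{H^1_r}$ and, after \eqref{r4}, $\|kv_z\|_{\tilde H^1_r}$ and $\|kv_r+v_\theta\|_{\tilde H^1_r}$. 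The CGP:2008 edge functionals play no role in this proof.
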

The proof of Theorem~\ref{cc} can be found in the Appendix. It uses the $L^2_{r^3}$-orthogonal projection when constructing a weighted Cl\'ement-type operator.

\section{A Weighted Mixed Problem} \label{mixed}

For the multigrid analysis in section~\ref{multigrid2}, we need to study the following weighted mixed problem:

find $(\bz,p)\in \bH_r(\dive^k,\Omega)\times L^2_r(\Omega)$ such that
\begin{equation} \label{mixed1}
\begin{aligned}
(\bz,\bw)_r - (\dive\nolimits_{rz}^k\bw, p)_r &= 0 &&\tn{ for all } \bw\in\bH_r(\dive\nolimits^k,\Omega), \\
(\dive\nolimits_{rz}^k\bz,s)_r &= (f,s)_r &&\tn{ for all } s\in L^2_r(\Omega).
\end{aligned}
\end{equation}
This mixed problem is the weighted Poisson equation with Dirichlet boundary conditions, and $p$ is the solution of
\begin{equation} \label{cts1}
\begin{aligned}
-\Delta_{(k)} p &= f &&\tn{ in } \Omega, \\
p &= 0 &&\tn{ on } \Gamma_1,
\end{aligned}
\end{equation}  
where $\Delta_{(k)}=\dive\nolimits_{rz}^k\grad\nolimits_{rz}^{k*}$. % and $\grad\nolimits_{rz}^{k*} p = (\partial_r p, \dfrac{kp}{r}, \partial_z p)^T$.
The mixed problem (\ref{mixed1}) has been studied in \cite{O:2019} as one of the axisymmetric Hodge Laplacian problems. 
It follows from \cite[Theorem 3.1]{O:2019} that (\ref{mixed1}) is well-posed and that
\[
\| \bz \|_{\bH_r(\dive^k,\Omega)} + \| p \|_{L^2_r(\Omega)} \leq C\| f \|_{L^2_r(\Omega)}.
\]
In \cite{FSCM_Part2}, more detailed regularity results were given for the solution of (\ref{cts1}). 
%The following result can be found in \cite[page 589]{FSCM_Part2}.
For our multigrid analysis, we further prove the following regularity result.
\begin{theorem}  \label{mixed_reg}
If $\breve{\Omega}$ is convex, then the solution to (\ref{mixed1}) denoted by $(\bz,p)\in \bH_r(\dive^k,\Omega)\times L^2_r(\Omega)$ satisfies
\begin{align*}
 \| p \|_{H^2_r(\Omega)}  +   \| p \|_{\tilde{H}^1_r(\Omega)} + \| \partial_r p \|_{L^2_{1/r}(\Omega)} &\leq C\| f \|_{L^2_r(\Omega)}, \\
 \| \bz_{rz} \|_{H^1_r(\Omega)} + \| kz_\theta-z_r \|_{\tilde{H}_r^1(\Omega)} &\leq C\| f \|_{L^2_r(\Omega)} 
\end{align*}
for all $|k| \geq 1$.
\end{theorem}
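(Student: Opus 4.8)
The plan is to reduce the weighted two-dimensional problem (\ref{mixed1}) to the three-dimensional Dirichlet problem on $\breve{\Omega}$, use the convex-domain $H^{2}$-shift estimate there, and then translate that estimate back to the meridian $\Omega$. Put $\breve{p}(r,\theta,z)=p(r,z)\cos k\theta$ and $\breve{f}=f\cos k\theta$ (the cosine/sine choice is immaterial). Since $\Delta_{(k)}=\dive\nolimits_{rz}\nolimits^{k}\grad\nolimits_{rz}\nolimits^{k*}$ is exactly the $k$-th $\theta$-Fourier component of the Euclidean Laplacian, (\ref{cts1}) says $-\Delta\breve{p}=\breve{f}$ in $\breve{\Omega}$ with $\breve{p}=0$ on $\partial\breve{\Omega}$. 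Convexity of $\breve{\Omega}$ gives $\breve{p}\in H^{2}(\breve{\Omega})$ with $\|\breve{p}\|_{H^{2}(\breve{\Omega})}\le C\|\breve{f}\|_{L^{2}(\breve{\Omega})}$, and $\|\breve{f}\|_{L^{2}(\breve{\Omega})}^{2}=\pi\|f\|_{L^{2}_{r}(\Omega)}^{2}$. Everything then hinges on a dictionary between Sobolev norms of mode-$k$ functions on $\breve{\Omega}$ and weighted norms of their radial parts on $\Omega$.

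First I would record the elementary Parseval-type identities $\|g\cos k\theta\|_{L^{2}(\breve{\Omega})}^{2}=\pi\|g\|_{L^{2}_{r}(\Omega)}^{2}$ and $\|\nabla(g\cos k\theta)\|_{L^{2}(\breve{\Omega})}^{2}=\pi(\|\grad\nolimits_{rz}g\|_{L^{2}_{r}(\Omega)}^{2}+k^{2}\|g\|_{L^{2}_{1/r}(\Omega)}^{2})$, valid for any $g=g(r,z)$. Applied to $g=p$ they give $\|p\|_{\tilde{H}^{1}_{r}(\Omega)}\le C\|f\|_{L^{2}_{r}(\Omega)}$; applied to $g=\partial_{z}p$ --- legitimate because $\partial_{z}\breve{p}=(\partial_{z}p)\cos k\theta$ is again mode-$k$ and $\|\partial_{z}\breve{p}\|_{H^{1}(\breve{\Omega})}\le C\|\breve{p}\|_{H^{2}(\breve{\Omega})}$ --- they give $\|\partial_{r}\partial_{z}p\|_{L^{2}_{r}(\Omega)}+\|\partial_{z}^{2}p\|_{L^{2}_{r}(\Omega)}+|k|\,\|\partial_{z}p\|_{L^{2}_{1/r}(\Omega)}\le C\|f\|_{L^{2}_{r}(\Omega)}$. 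What remains is control of $\partial_{r}^{2}p$, $\tfrac1r\partial_{r}p$ and $\tfrac{1}{r^{2}}p$ in $L^{2}_{r}(\Omega)$. For this I would expand the Euclidean second derivatives of $\breve{p}$ in cylindrical coordinates: on $\breve{\Omega}$ minus the axis each of $\partial_{xx}\breve{p}$, $\partial_{yy}\breve{p}$, $\partial_{xy}\breve{p}$ is a trigonometric polynomial in $\theta$ whose coefficients are $k$-dependent linear combinations of $\partial_{r}^{2}p$, $\tfrac1r\partial_{r}p$ and $\tfrac{k^{2}}{r^{2}}p$; e.g. $\partial_{xx}\breve{p}+\partial_{yy}\breve{p}=(\partial_{r}^{2}p+\tfrac1r\partial_{r}p-\tfrac{k^{2}}{r^{2}}p)\cos k\theta$, which is just the equation, whereas
\[
(\partial_{x}+i\partial_{y})^{2}\breve{p}=e^{2i\theta}\Bigl[\bigl(\partial_{r}^{2}p-\tfrac1r\partial_{r}p+\tfrac{k^{2}}{r^{2}}p\bigr)\cos k\theta+i\bigl(\tfrac{2k}{r^{2}}p-\tfrac{2k}{r}\partial_{r}p\bigr)\sin k\theta\Bigr],
\]
so $\partial_{xx}\breve{p}-\partial_{yy}\breve{p}$ and $2\partial_{xy}\breve{p}$ carry the frequencies $k\pm 2$. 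Since the $m$-th $\theta$-Fourier coefficient of any $L^{2}(\breve{\Omega})$ function has $L^{2}_{r}(\Omega)$-norm bounded by a constant times its $L^{2}(\breve{\Omega})$-norm, reading off the coefficients at frequencies $k\pm 2$ --- together with the relation $\partial_{r}^{2}p+\tfrac1r\partial_{r}p-\tfrac{k^{2}}{r^{2}}p=-f-\partial_{z}^{2}p$ already under control --- produces a $3\times 3$ linear system for $(\partial_{r}^{2}p,\tfrac1r\partial_{r}p,\tfrac{k^{2}}{r^{2}}p)$ with a $k$-dependent, generically nonsingular matrix; solving it yields $\|\partial_{r}^{2}p\|_{L^{2}_{r}(\Omega)}+\|\partial_{r}p\|_{L^{2}_{1/r}(\Omega)}+\bigl\|\tfrac{1}{r^{2}}p\bigr\|_{L^{2}_{r}(\Omega)}\le C\|f\|_{L^{2}_{r}(\Omega)}$. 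With the earlier bounds this gives the first line of the theorem. The smallest mode $|k|=1$, where the two frequencies $k\pm 2$ yield only two independent relations, has to be treated separately, e.g.\ by invoking the refined regularity of \cite{FSCM_Part2}.

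The bounds for $\bz$ are then cheap. From (\ref{mixed1}), the adjointness of $\grad\nolimits_{rz}\nolimits^{k*}$ and $-\dive\nolimits_{rz}\nolimits^{k}$, and $p=0$ on $\Gamma_{1}$, one identifies $\bz=-\grad\nolimits_{rz}\nolimits^{k*}p$, so $\bz_{rz}=-\grad\nolimits_{rz}p$ and $kz_{\theta}-z_{r}=\partial_{r}p-\tfrac{k^{2}}{r}p$. Hence $\|\bz_{rz}\|_{H^{1}_{r}(\Omega)}=\|\grad\nolimits_{rz}p\|_{H^{1}_{r}(\Omega)}\le C\|p\|_{H^{2}_{r}(\Omega)}\le C\|f\|_{L^{2}_{r}(\Omega)}$. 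For $kz_{\theta}-z_{r}$, the identity $\dive\nolimits_{rz}\nolimits^{k}\bz=f$ rewrites as $\tfrac{kz_{\theta}-z_{r}}{r}=-\partial_{r}^{2}p-\partial_{z}^{2}p-f$, which gives $\|kz_{\theta}-z_{r}\|_{L^{2}_{1/r}(\Omega)}\le C\|f\|_{L^{2}_{r}(\Omega)}$; its $L^{2}_{r}(\Omega)$-bound follows from $\|\partial_{r}p\|_{L^{2}_{r}(\Omega)}$ and $\|p\|_{L^{2}_{1/r}(\Omega)}$, while $\grad\nolimits_{rz}(kz_{\theta}-z_{r})$ is a $k$-dependent combination of $\partial_{r}^{2}p$, $\partial_{r}\partial_{z}p$, $\tfrac1r\partial_{r}p$, $\tfrac{1}{r^{2}}p$ and $\tfrac1r\partial_{z}p$, all of which are now controlled. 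Summing over $\Omega$ gives $\|kz_{\theta}-z_{r}\|_{\tilde{H}^{1}_{r}(\Omega)}\le C\|f\|_{L^{2}_{r}(\Omega)}$, completing the proof.

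The hard part is what happens at the axis $\Gamma_{0}$. The $L^{2}_{1/r}$-estimate on $\partial_{r}p$ and the $L^{2}_{r}$-estimate on $\tfrac{1}{r^{2}}p$ are precisely where global $H^{2}$-regularity of $\breve{p}$ must be converted into the correct order of vanishing of $p$ along $\Gamma_{0}$, and disentangling the individual weighted second-order quantities from the single scalar bound $\|\breve{p}\|_{H^{2}(\breve{\Omega})}$ is most delicate for the lowest Fourier modes, where the auxiliary frequencies collide. I also expect to spend real care making the ``read off the $\theta$-Fourier coefficient'' step rigorous: the cylindrical formulas for the $\partial_{ij}\breve{p}$ must be justified as a.e.\ identities on all of $\breve{\Omega}$ (the axis being a null set), and $p$ must be shown to have enough interior $(r,z)$-regularity for the pointwise manipulations. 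The remaining work is routine bookkeeping with the weighted norms.
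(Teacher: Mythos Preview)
Your approach is correct but takes a genuinely different route from the paper. The paper's proof is essentially a two-line citation: for the $p$-estimates it invokes the weighted regularity theory of \cite[Theorem 3.2 and p.~594]{FSCM_Part2}, which already packages the translation between $H^{2}(\breve{\Omega})$ and the weighted spaces on $\Omega$ (including the delicate $|k|=1$ case you flag); for the $\bz$-estimates it does \emph{not} go through the formula $\bz=-\grad\nolimits_{rz}^{k*}p$ term by term, but instead sets $\bq=(z_r,iz_\theta,z_z)^T$, observes that $\bcurl_k\bq=0$, $\dive_k\bq=f$ with tangential boundary conditions, and then applies the continuous embedding $\bX_{(k)}(\Omega)\hookrightarrow\bH^1_{(k)}(\Omega)$ from \cite[Theorem 2.10]{FSCM_Maxwell}. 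This Maxwell-type embedding gives the $\tilde{H}^1_r$-control of $kz_\theta-z_r$ in one stroke, uniformly in $k$.

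Your route is more self-contained: you redo by hand what \cite{FSCM_Part2} proves, via the $3\times 3$ linear system in $(\partial_r^2 p,\tfrac1r\partial_r p,\tfrac{k^2}{r^2}p)$ extracted from Cartesian second derivatives, and then read off the $\bz$-bounds directly from $\bz=-\grad\nolimits_{rz}^{k*}p$. This works, and it is instructive, but it costs you the special-case analysis at $|k|=1$ (where, as you note, the auxiliary frequencies collapse and you end up invoking \cite{FSCM_Part2} anyway), plus the bookkeeping needed to control $\tfrac{1}{r^2}p$ when bounding $\partial_r(kz_\theta-z_r)$. The paper's embedding argument sidesteps all of that. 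One small remark: for $|k|=1$ you can recover $\|p/r^2\|_{L^2_r(\Omega)}$ directly from the equation $\tfrac{1}{r^2}p=\partial_r^2 p+\tfrac1r\partial_r p+\partial_z^2 p+f$ once the first line of the theorem is in hand, so your deferral to the reference for the $\bz$-part is not strictly necessary.
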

\begin{proof}
If $\breve{\Omega}$ is convex, by \cite[page 589]{FSCM_Part2}, the solution $p$ to (\ref{cts1}) is in
$H^2_{(k)}(\Omega)\cap H_{r,0}^1(\Omega)$,
where 
\begin{align*}
H^2_{(\pm 1)}(\Omega) &= \{w\in H^2_r(\Omega): w=0 \tn{ on } \Gamma_0 \}, \\
H^2_{(k)}(\Omega) &= H_r^2(\Omega) \cap H^1_{1/r}(\Omega) \tn{ for } |k|\geq 2.
\end{align*}
Then the first regularity result follows by \cite[Theorem 3.2]{FSCM_Part2} for $|k|\geq 2$
and by  \cite[page 594]{FSCM_Part2} for $|k|=1$.
%Furthermore, for $|k|\geq 2$, by \cite[Theorem 3.2]{FSCM_Part2}, 
%\begin{equation} \label{k2}
%\| p \|_{H^2_r(\Omega)} + \| p \|_{H^1_{1/r}(\Omega)} \leq C\| f \|_{L^2_r(\Omega)},
%\end{equation}
%and for $k=\pm 1$, by \cite[page 594]{FSCM_Part2}, we have 
%\begin{equation} \label{k1}
%\| p \|_{H^2_r(\Omega)}  +   \| p \|_{\tilde{H}^1_r(\Omega)} + \| \partial_r p \|_{L^2_{1/r}(\Omega)} \leq C\| f \|_{L^2_r(\Omega)}.
%\end{equation}

Next, notice that $\bz=-\grad\nolimits_{rz}^{k*}p$, and consider the complex vector-valued function $\bq=(z_r,  iz_\theta, z_z)^T$. 
%Since , for modes $|k|\geq 2$, we can use (\ref{k2}) and \cite[Theorem 3.2]{FSCM_Part2} to reach
%\begin{equation} \label{z}
%\begin{aligned}
%\| \bz_{rz} \|_{H^1_r(\Omega)} + \| kz_\theta-z_r \|_{\tilde{H}_r^1(\Omega)} \leq C\| f \|_{L^2_r(\Omega)}.
%\end{aligned}
%\end{equation}
%For mode $k=1$,  $\bz=-(\partial_r p, \dfrac{p}{r}, \partial_z p)^T$. Let us consider a complex vector-valued function $\bq=(z_r,  iz_\theta, z_z)^T$. 
Then, by direct calculation, one can show that
\begin{equation} \label{z_est}
\begin{aligned}
\bcurl\nolimits_k \bq &= 0, \\
\dive\nolimits_k \bq &= f, \\
\bq_{rz} \cdot \bt &= 0 \tn{ on } \Gamma_1, \\
q_\theta &= 0 \tn{ on } \Gamma_1,
\end{aligned}
\end{equation}
where 
\begin{align*}
\dive\nolimits_k\bw &= \dfrac{1}{r}\partial_r(rw_r) + \dfrac{ik}{r}w_\theta + \partial_zw_z, \\
(\bcurl\nolimits_k\bw)_r &= \dfrac{ik}{r}w_z-\partial_zw_\theta, \\ 
(\bcurl\nolimits_k\bw)_\theta &= \partial_zw_r-\partial_rw_z, \\
(\bcurl\nolimits_k\bw)_z &= \dfrac{1}{r}(\partial_r(rw_\theta)-ikw_r).
\end{align*}
%The last inequality above is true, since $p\in H^1_{r,\diamond}(\Omega)$ and $(\partial_rp,\partial_zp)\in H_{r,\diamond}(\curl,\Omega)$. (See \cite{CGO:2010}.)
The boundary conditions in (\ref{z_est}) are true since $p=0$ on $\Gamma_1$. %since $p\in H_{r,0}^1(\Omega)=\{H^1_r(\Omega): p=0 \tn{ on } \Gamma_1 \}$ and $\grad\nolimits_{rz}$ maps $H_{r,0}^1(\Omega)$ into 
Therefore, 
\[
\bq \in \bX_{(k)}(\Omega) := \{\bv\in \bL^2_r(\Omega): \bcurl\nolimits_k\bv \in \bL^2_r(\Omega) \tn{ and } \dive\nolimits_k\bv\in L^2_r(\Omega) \tn{ and } \bv_{rz}\cdot \bt = 0 \tn{ and } v_\theta=0 \tn{ on } \Gamma_1\}.
\]
%This space is endowed with the canonical norm $\| \bv \|^2_{\bX_{(k)}(\Omega)}=\| \bcurl\nolimits_k\bv \|^2_{L^2_r(\Omega)} + \| \dive\nolimits_k\bv \|^2_{L^2_r(\Omega)}$.
By \cite[Theorem 2.10]{FSCM_Maxwell}, this space is continuously embedded in $\bH^1_{(k)}(\Omega)$ where
\begin{align*}
\bH^1_{(\pm 1)}(\Omega)&=\{\bw\in H_r^1(\Omega)\times H_r^1(\Omega) \times \tilde{H}_r^1(\Omega): w_r\pm iw_\theta\in L^2_{1/r}(\Omega) \}, \\
\bH^1_{(k)}(\Omega) &= \tilde{H}_r^1(\Omega) \times \tilde{H}_r^1(\Omega) \times \tilde{H}_r^1(\Omega) \tn{ for } |k|\geq 2.
\end{align*}
For all $|k|\geq 1$, this continuous embedding proves the second result.
%the canonical norm of $\bH^1_{(k)}(\Omega)$ is $\| \bv \|_{\bH^1_{(k)}(\Omega) }^2 = \| \bv \|^2_{H^1_r(\Omega)} + |k|^2\| \bv \|^2_{L^2_{1/r}(\Omega)}$.
%Hence,  for all $|k|\geq 1$.

\begin{comment}
\[
\bH^1_{(1)}(\Omega)=\{\bw\in H_r^1(\Omega)\times H_r^1(\Omega) \times \tilde{H}_r^1(\Omega): w_r+iw_\theta\in L^2_{1/r}(\Omega) \}
\]  
, we have
\[
\|q_r\|_{H^1_r(\Omega)} + \|q_\theta \|_{H^1_r(\Omega)}  + \| q_z \|_{\tilde{H}_r^1(\Omega)} +  \| q_r + iq_\theta \|_{L^2_{1/r}(\Omega)}  \leq C\| f \|_{L^2_r(\Omega)}.
\]
Hence, we have
\[
\|z_r\|_{H^1_r(\Omega)} + \|z_\theta \|_{H^1_r(\Omega)}  + \| z_z \|_{\tilde{H}_r^1(\Omega)} +  \| z_r - z_\theta \|_{L^2_{1/r}(\Omega)}  \leq C\| f \|_{L^2_r(\Omega)},
\]
so 
\[
\| \bz_{rz} \|_{H^1_r(\Omega)} + \| z_\theta-z_r \|_{\tilde{H}_r^1(\Omega)} \leq C\| f \|_{L^2_r(\Omega)} 
\]
continues to hold for modes $k=1$. Similar results for mode $k=-1$ can be proved in a similar way.
\end{comment}
\end{proof}

Next, we consider the following discrete version of (\ref{mixed1}):
\begin{equation} \label{mixed_disc}
\begin{aligned}
(\bz_h, \bw_h)_r - (p_h, \dive\nolimits_{rz}^k \bw_h)_r &= 0 &&\tn{ for all } \bw_h\in\bC_h, \\
(\dive\nolimits_{rz}^k\bz_h, s_h)_r &= (f,s_h)_r &&\tn{ for all } s_h\in D_h.
\end{aligned}
\end{equation}
Stability and convergence results of (\ref{mixed_disc}) were proved in 
\cite[Theorem 4.1]{O:2019}. We prove a more concrete error estimate here. 

\begin{theorem} \label{error}
Suppose $(\bz,p)\in\bH_r(\dive^k,\Omega)\times L^2_r(\Omega)$ solve (\ref{mixed1}) and
$(\bz_h,p_h)\in \bC_h\times D_h$ solve $(\ref{mixed_disc})$. If $f\in D_h$, we have the following error estimates for all $|k|\geq 1$:
\begin{align*}
 \|\bz-\bz_h\|_{L^2_r(\Omega)} &\leq Ch\|f\|_{L^2_r(\Omega)}, \\
 \|\Pi_h^S p - p_h\|_{L^2_r(\Omega)} &\leq Ch^2\|f\|_{L^2_r(\Omega)},
\end{align*}
where $\Pi_h^S: L^2_r(\Omega) \rightarrow D_h$ denotes the $L^2_r$-orthogonal projection onto $D_h$.
\end{theorem}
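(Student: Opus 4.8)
The proof follows the classical error analysis for mixed finite element methods, with the weighted analogues of the three standard ingredients supplied by the commuting diagram of Theorem~\ref{ProjSet}, the concrete approximation bound of Theorem~\ref{concreteD}, and the regularity estimate of Theorem~\ref{mixed_reg}. The first and key observation is that the divergence error vanishes identically: since $\dive\nolimits_{rz}^{k}$ maps $\bC_h$ into $D_h$ and $f\in D_h$, the second equation of (\ref{mixed_disc}) together with $\dive\nolimits_{rz}^{k}\bz_h\in D_h$ forces $\dive\nolimits_{rz}^{k}\bz_h=f$, which equals $\dive\nolimits_{rz}^{k}\bz$ by the second equation of (\ref{mixed1}); hence $\dive\nolimits_{rz}^{k}(\bz-\bz_h)=0$. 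Commutativity in Theorem~\ref{ProjSet} further gives $\dive\nolimits_{rz}^{k}\Pi_h^{d,k}\bz=\Pi_h^{o,k}\dive\nolimits_{rz}^{k}\bz=\Pi_h^{o,k}f=f$, because $\Pi_h^{o,k}$ is a projection onto $D_h$ and $f\in D_h$; thus $\Pi_h^{d,k}\bz-\bz_h$ is also $\dive\nolimits_{rz}^{k}$-free.

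\emph{Estimate for $\bz$.} Subtracting the first equation of (\ref{mixed_disc}) from the first equation of (\ref{mixed1}) restricted to $\bw=\bw_h\in\bC_h$ yields the error relation
\[
(\bz-\bz_h,\bw_h)_r-(\dive\nolimits_{rz}^{k}\bw_h,\,p-p_h)_r=0 \Forall \bw_h\in\bC_h .
\]
Taking $\bw_h=\Pi_h^{d,k}\bz-\bz_h$, which is $\dive\nolimits_{rz}^{k}$-free by the previous paragraph, gives $(\bz-\bz_h,\Pi_h^{d,k}\bz-\bz_h)_r=0$, hence $\|\Pi_h^{d,k}\bz-\bz_h\|_{L^2_r(\Omega)}\le\|\bz-\Pi_h^{d,k}\bz\|_{L^2_r(\Omega)}$ and, by the triangle inequality, $\|\bz-\bz_h\|_{L^2_r(\Omega)}\le 2\|\bz-\Pi_h^{d,k}\bz\|_{L^2_r(\Omega)}$. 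Theorem~\ref{mixed_reg} shows $|\bz_{rz}|_{H^1_r(\Omega)}+\|kz_\theta-z_r\|_{\tilde{H}_r^1(\Omega)}\le C\|f\|_{L^2_r(\Omega)}$, which in particular places $\bz$ in the regularity class required by Theorem~\ref{concreteD}; that theorem then gives $\|\bz-\Pi_h^{d,k}\bz\|_{L^2_r(\Omega)}\le Ch\|f\|_{L^2_r(\Omega)}$, proving the first bound.

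\emph{Estimate for $p$ by duality.} Put $q_h:=\Pi_h^Sp-p_h\in D_h$. Since $\dive\nolimits_{rz}^{k}\bw_h\in D_h$ and $\Pi_h^S$ is $L^2_r$-orthogonal, the error relation becomes $(\dive\nolimits_{rz}^{k}\bw_h, q_h)_r=(\bz-\bz_h,\bw_h)_r$ for all $\bw_h\in\bC_h$. Let $(\bphi,\psi)\in\bH_r(\dive\nolimits^k,\Omega)\times L^2_r(\Omega)$ solve (\ref{mixed1}) with data $q_h$ in place of $f$, so $\dive\nolimits_{rz}^{k}\bphi=q_h$. Choosing $\bw_h=\Pi_h^{d,k}\bphi$ and using $\dive\nolimits_{rz}^{k}\Pi_h^{d,k}\bphi=\Pi_h^{o,k}q_h=q_h$ (again because $\Pi_h^{o,k}$ fixes $D_h$) gives $\|q_h\|_{L^2_r(\Omega)}^2=(\bz-\bz_h,\Pi_h^{d,k}\bphi)_r$. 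Writing this as $(\bz-\bz_h,\Pi_h^{d,k}\bphi-\bphi)_r+(\bz-\bz_h,\bphi)_r$, the last term equals $(\dive\nolimits_{rz}^{k}(\bz-\bz_h),\psi)_r$ after testing the first equation of the dual problem with $\bz-\bz_h$, and this vanishes since $\dive\nolimits_{rz}^{k}(\bz-\bz_h)=0$. Therefore $\|q_h\|_{L^2_r(\Omega)}^2\le\|\bz-\bz_h\|_{L^2_r(\Omega)}\,\|\Pi_h^{d,k}\bphi-\bphi\|_{L^2_r(\Omega)}$; applying Theorem~\ref{concreteD} and Theorem~\ref{mixed_reg} to the dual solution bounds the second factor by $Ch\|q_h\|_{L^2_r(\Omega)}$, and combined with the already-established estimate for $\bz$ this yields $\|q_h\|_{L^2_r(\Omega)}\le Ch^2\|f\|_{L^2_r(\Omega)}$.

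The essential point — and the only place any genuine idea enters — is the observation that $f\in D_h$ combined with the commuting diagram makes the divergence error identically zero; this is what both decouples the $\bz$-estimate and annihilates the cross term $(\bz-\bz_h,\bphi)_r$ in the duality argument, turning the naive $O(h)$ estimate for the pressure into $O(h^2)$. The remaining work is routine bookkeeping: verifying that the solutions of the primal and dual weighted mixed problems meet the regularity hypotheses of Theorem~\ref{concreteD} — which is precisely the content of Theorem~\ref{mixed_reg} — and noting that only the projection property of $\Pi_h^{o,k}$ onto $D_h$, and not any orthogonality, is used.
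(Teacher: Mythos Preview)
Your proof is correct and follows essentially the same approach as the paper's: the first estimate uses the same divergence-free test function $\Pi_h^{d,k}\bz-\bz_h$ (the paper unpacks the orthogonality to get $\|\bz-\bz_h\|_{L^2_r}\le\|\bz-\Pi_h^{d,k}\bz\|_{L^2_r}$ without your factor~$2$), and the second is the same duality argument driven by $\dive_{rz}^k(\bz-\bz_h)=0$. The only cosmetic difference is in the pressure step: the paper introduces the \emph{discrete} dual solution $(\epsilon_{\bz,h},\epsilon_{p,h})$ and arrives at $(\bz-\bz_h,\epsilon_{\bz,h}-\epsilon_\bz)_r$, then applies the first estimate twice, whereas you test directly with $\Pi_h^{d,k}\bphi$ and bound $\|\bphi-\Pi_h^{d,k}\bphi\|_{L^2_r}$ via Theorem~\ref{concreteD} applied to the continuous dual; both routes are equivalent once Theorem~\ref{mixed_reg} is available.
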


\begin{proof}
Throughout this proof, we assume that $f\in D_h$.
Let $\bw_h=\bz_h-\Pi_h^{d,k}\bz$. Then,  since $\dive\nolimits_{rz}^k\bz=f \in D_h$, 
\begin{equation} \label{boo}
\dive\nolimits_{rz}^k\bz = \dive\nolimits_{rz}^k\bz_h, % = f \in D_h,
\end{equation}
and so
\begin{equation} \label{boo2}
\begin{aligned}
\dive\nolimits_{rz}^k\bw_h &= \dive\nolimits_{rz}^k(\bz_h-\Pi_h^{d,k}\bz), \\
&= \dive\nolimits_{rz}^k\bz_h - \Pi_h^{o,k} \dive\nolimits_{rz}^k\bz &&\tn{ by  Theorem~\ref{ProjSet}~item~(\ref{comm})}, \\
&= 0 &&\tn{ by (\ref{boo}).}
\end{aligned}
\end{equation}
We also have that 
\begin{equation} \label{boo3}
\begin{aligned}
(\bz-\bz_h,\bv_h)_r - (p-p_h, \dive\nolimits_{rz}^k\bv_h)_r &= 0 &&\tn{ for all } \bv_h\in\bC_h, \\
(\dive\nolimits_{rz}\bz-\dive\nolimits_{rz}^k\bz_h, s_h)_r &= 0 &&\tn{ for all } s_h\in D_h
\end{aligned}
\end{equation}
Therefore, by (\ref{boo2}) and (\ref{boo3}), it follows that
\begin{equation*}
\begin{aligned}
(\bz-\bz_h, \bw_h) &= 0, \\
%(\bz-\bz_h, \bz_h-\Pi_h^{d,k}\bz-\bz+\bz)_r &= 0, \\
(\bz-\bz_h, \bz-\Pi_h^{d,k}\bz)_r &= (\bz-\bz_h, \bz-\bz_h)_r. 
\end{aligned}
\end{equation*}
Therefore,
\[
\|\bz-\bz_h\|_{\Lr} \leq \|\bz-\Pi_h^{d,k}\bz\|_{\Lr}.
\]
This together with Theorem~\ref{concreteD} and Theorem~\ref{mixed_reg}  completes the proof of the first estimate of the Theorem. 

To prove the second estimate of the Theorem, we first let $(\epsilon_{\bz}, \epsilon_{p})$ and $(\epsilon_{\bz,h}, \epsilon_{p,h})$ be the solution to (\ref{mixed1}) and (\ref{mixed_disc}) respectively
with $f$ replaced with $\Pi_h^S p - p_h\in D_h$. Then,
\begin{align*}
\|\Pi_h^S p - p_h\|_{\Lr}^2 &= (\dive\nolimits_{rz}^k\epsilon_{\bz,h}, \Pi_h^S p - p_h)_r  &&\tn{ by definition of } \epsilon_{\bz,h}, \\
&= (\dive\nolimits_{rz}^k\epsilon_{\bz,h}, p - p_h)_r, \\% &&\tn{ by definition of } \Pi_h^S, \\
&= (\bz-\bz_h, \epsilon_{\bz,h})_r &&\tn{ by (\ref{boo3}), } \\ 
&= (\bz-\bz_h, \epsilon_{\bz,h}-\epsilon_{\bz})_r &&\tn{ by (\ref{boo}). }
\end{align*}
Therefore,
\[
\|\Pi_h^S p - p_h\|_{\Lr}^2 \leq \| \bz-\bz_h \|_{\Lr} \| \epsilon_{\bz}-\epsilon_{\bz,h}\|_{\Lr} \leq Ch^2\| f \|_{\Lr}\|\Pi_h^S p - p_h \|_{\Lr}, %\\ &&\tn{ by applying (\ref{z}) twice. }
\]
where in the last inequality, we are using the first estimate of the Theorem twice.
This completes the proof of the second estimate of the Theorem.
\end{proof}

\section{The Multigrid Algorithm} \label{multigrid}

We consider a sequence of nested meshes $\TT=\{\TT_1, \TT_2, \cdot\cdot\cdot, \TT_L \}$ for the multigrid algorithm.
In particular, $\TT_1$ is the coarsest level mesh, and $\TT_l$ is obtained by connecting the midpoints of all edges in $\TT_{l-1}$ for $l=2, 3, \cdot\cdot\cdot, L$. \
Throughout this paper, we assume that $\TT$ satisfy this property.
Let $\bC_l$ denote the discrete space $\bC_h$ on the $l$-th level mesh. Define $\Lambda_l: \bC_l \rightarrow \bC_l$ in the following way:
\[
(\Lambda_l^k\bv_l,\bw_l) = \Lambda^k(\bv_l, \bw_l) \tn{ for all } \bv_l, \bw_l \in \bC_l.
\]
Since we are assuming that $|k|\geq 1$ is fixed, we   
write $\Lambda_l$ instead of $\Lambda_l^k$ for simplicity of notation as we are doing for $\Lambda^k$.
In order to approximate the solution $\bu\in \bH_{r}(\dive\nolimits^k,\Omega)$ that satisfies
\[
\Lambda(\bu,\bv) = (\bff, \bv)_r \tn{ for all } \bv\in \bH_{r}(\dive\nolimits^k,\Omega),
\]
the multigrid algorithm presented here will solve 
\[
\Lambda_L \bu_L =\bff_L
\]
on the finest level mesh by using the sequence of meshes $\TT$. The right-hand-side function $\bff_L$ denotes the usual representation of the data function $\bf$. 
Multigrid will use the sequence of meshes $\TT$ to provide a solution in $\bC_L$.

For the multiplicative subspace correction method, we will use the following subspace decomposition of $\bC_l$ as in \cite{AFW:2000}:
\begin{equation} \label{decomp}
\bC_l = \sum_{\nu\in\mathcal{V}_l } \bC_l^\nu,
\end{equation}
where $\mathcal{V}_l$ denotes the set of mesh vertices in the $l$-th level mesh, $D_\nu$ denotes the vertex patch of $\nu\in\mathcal{V}_l$ (the union of all triangles that have $\nu$ as a vertex), and 
\[
\bC_l^\nu = \{\bw_l\in \bC_l: \mathrm{supp}(\bw_l)\subset D_\nu \}.
\]
We will use the decomposition (\ref{decomp}) to construct additive and multiplicative subspace correction methods.
We will present here the block Gauss-Seidel type multiplicative smoothing iteration $\bu_{i+1}=\mathrm{gs}(\bu_i,\bff)$. 
Let $\bC_{l,j}$ for $j=1,2,\cdot\cdot\cdot, N_l$ be the enumeration of 
subspaces appearing in the subspace decomposition (\ref{decomp}) where $N_l$ is the number of vertices in $\TT_l$. 
Then, define $\Lambda_{l,j}: \bC_{l,j}\rightarrow \bC_{l,j}$ as
\[
(\Lambda_{l,j}\bv,\bw) = \Lambda(\bv, \bw) \tn{ for all } \bv, \bw\in \bC_{l,j},
\]
and $Q_{l,j}$ as the $L^2_r$-orthogonal projection onto $ \bC_{l,j}$. Similarly, we will use $Q_l$ to denote the $L^2_r$-orthogonal projection onto $\bC_l$.
\begin{algorithm}(multiplicative smoothing) Given $\bu_i\in \bC_l$, $\bu_{i+1}=\mathrm{gs}(\bu_i,\bff)$ in $\bC_l$ is computed in the following way:
\begin{enumerate}
\item Set $\bu_i^{(0)}=\bu_i$.
\item For $j=1,2,\cdot\cdot\cdot, N_l$, compute
\[
\bu_i^{(j)} = \bu_i^{(j-1)} + \Lambda_{l,j}^{-1}Q_{l,j}(\bff-\Lambda_l\bu_i^{(j-1)}).
\]
\item Set $\bu_{i+1}=\bu_i^{(N_l)}$.
\end{enumerate}
\end{algorithm}
Standard arguments show that  $\bu_{i+1}=\mathrm{gs}(\bu_i,\bff)$ can be rewritten as
\[
\bu_{i+1}=\bu_i + R_l(\bff-\Lambda_l\bu_i), 
\]
where
\[
R_l = (I-(I-P_{l,N_l})(I-P_{l,N_{l,N_l-1}})\cdot\cdot\cdot(I-P_{l,1}))\Lambda_l^{-1},
\]
where $P_{l,j}$ denotes the orthogonal projection onto $\bC_{l,j}$ with respect to the $\Lambda(\cdot,\cdot)$-inner product.
Now we are ready to state the multigrid algorithm. 
\begin{algorithm}(Multigrid V-cycle) Given $\bu$ and $\bff$ in $\bC_l$, define the output $\mathrm{mg}_l(\bu,\bff)$ in $\bC_l$ by the following recursive procedure:
\begin{enumerate}
\item Set $\mathrm{mg}_1(\bu,\bff)=\Lambda_1^{-1}\bff$.
\item For $l>1$, define $\mathrm{mg}_l(\bu,\bff)$ recursively:
\begin{enumerate}
\item (pre-smoothing) $\bv^{(1)}=\bu+R_l(\bff-\Lambda_l\bu)$.
\item (coarse grid correction) $\bv^{(2)}=\bv^{(1)} + \mathrm{mg}_{l-1}(\mathbf{0},Q_{l-1}(\bff-\Lambda_l\bv^{(1)}))$.
\item (post-smoothing) $\bv^{(3)} = \bv^{(2)}+R_l^t(\bff-\Lambda_l\bv^{(2)})$.
\item $\mathrm{mg}_l(\bu,\bff)=\bv^{(3)}$.
\end{enumerate}
\end{enumerate}
\end{algorithm}
The V-cycle iterates $\bx_{i+1}=\mathrm{mg}_l(\bx_i,\bff)$, approximating $\bx=\Lambda_l^{-1}\bff$, are connected through a linear error reduction operator $\mathit{E}_l$, i.e.,
\begin{equation} \label{e_reduction}
\bx_{i+1}-\bx = \mathit{E}_l(\bx_i-\bx).
\end{equation}
(See \cite{mg:book} for details.) The following theorem is the main result of this paper. 
\begin{theorem} \label{main}
Assume that $\breve{\Omega}\subset \RRR^3$ is convex. Then, for any fixed integer $|k|\geq 1$, there exists $0<\delta<1$ independent of the meshsize and refinement level that satisfies 
\[
0 \leq \Lambda(\mathit{E}_l\bu,\bu) \leq \delta\Lambda(\bu,\bu) \tn{ for all } \bu\in\bC_l \tn{ and all } l\geq1.
\]
\end{theorem}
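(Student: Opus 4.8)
The plan is to verify the hypotheses of the abstract multigrid V-cycle convergence theory of Arnold, Falk, and Winther \cite{AFW:2000}, for which it suffices to establish (i) a stable space decomposition estimate for $\bC_l$ with respect to the vertex-patch subspaces $\bC_l^\nu$, with constant independent of $l$, and (ii) the standard strengthened Cauchy--Schwarz / limited-overlap bound on the subspaces, together with the spectral bound on the local solvers. Part (ii) is essentially combinatorial: each triangle belongs to only three vertex patches, so the interactions have bounded overlap, and the inverse inequality on $\bC_l^\nu$ together with shape regularity gives the requisite spectral equivalence; this is the routine part. The crux is the stable decomposition, which in the $\bH(\dive)$ setting requires a discrete regular (Helmholtz-type) splitting: any $\bw_l\in\bC_l$ must be written as $\bw_l=\bw_l^\perp+\bcurl_{rz}^k\bphi_l$ where the divergence-free remainder $\bw_l^\perp$ is controlled and $\bphi_l\in\bB_l$ is controlled in $\bL^2_r$, so that each piece can be localized to the vertex patches with $L^2_r$-stability.

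First I would set up the two-level decomposition: given $\bw_l\in\bC_l$, use the commuting projections of Theorem~\ref{ProjSet} on successive levels to split off a coarse-grid component and work with the ``high-frequency'' part; then on each level apply the discrete Helmholtz decomposition of $\bC_l$ relative to $\bcurl_{rz}^k\bB_l$. The divergence part is handled by solving the discrete weighted mixed problem (\ref{mixed_disc}): writing $\dive_{rz}^k\bw_l=f_l\in D_l$ and letting $(\bz_l,p_l)$ solve (\ref{mixed_disc}) with data $f_l$, the error estimate of Theorem~\ref{error} together with the $H^2_r$/$\tilde H^1_r$ elliptic regularity of Theorem~\ref{mixed_reg} — which is exactly where convexity of $\breve\Omega$ enters — gives $\|\bw_l-\bz_l\|_{L^2_r}\lesssim h_l\|f_l\|_{L^2_r}\lesssim h_l\|\dive_{rz}^k\bw_l\|_{L^2_r}$, so the coarse remainder $\bz_l$ is divergence-matched and small. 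The genuinely divergence-free discrete remainder $\bw_l-\bz_l$ then lies in $\bcurl_{rz}^k\bB_l$ by the exactness of the bottom row of the commuting diagram; writing it as $\bcurl_{rz}^k\bphi_l$ and using the concrete error estimate of Theorem~\ref{cc} for $\Pi_h^{c,k}$ (again using convexity-driven regularity) one bounds $\|\bphi_l\|_{L^2_r}$ appropriately. Summing these estimates over the levels with the usual geometric series in $h_l$ produces the stable decomposition with an $l$-independent constant.

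Once the decomposition is in hand, localizing to vertex patches is done with a partition-of-unity / Clément-type argument on $\bB_l$ and $\bC_l$ analogous to the one used in the proof of Theorem~\ref{concreteD}: the weighted $L^2_{r^3}$-based averaging operators there are precisely what one needs to handle the $1/r$ factors in the degrees of freedom when cutting $\bphi_l$ and $\bz_l$ into patch-supported pieces without losing $L^2_r$-stability near $\Gamma_0$. Assembling (i) and (ii) into the AFW framework then yields the uniform contraction constant $0<\delta<1$, and the lower bound $0\le\Lambda(E_l\bu,\bu)$ follows from the symmetric V-cycle (pre- and post-smoothing with adjoint smoothers) as in \cite{AFW:2000, mg:book}. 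The main obstacle I anticipate is the stable localization near the axis of symmetry: the vertex patches touching $\Gamma_0$ must be split in a way compatible with the weighted degrees of freedom (\ref{newC2})--(\ref{newC}) and the special treatment of type-1 and type-2 triangles, and verifying that the cut-off does not introduce uncontrolled $1/r$ singularities is where the weighted-space bookkeeping, rather than any abstract difficulty, will require the most care.
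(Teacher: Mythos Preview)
Your overall strategy---reduce to the AFW two-level conditions (stable decomposition plus limited interaction), and derive the stable decomposition from a discrete Helmholtz splitting fed by the weighted mixed-problem estimates---is exactly the paper's route. Two pieces of your outline, however, need correction.

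First, the estimate you claim, $\|\bw_l-\bz_l\|_{L^2_r}\le Ch_l\|\dive_{rz}^k\bw_l\|_{L^2_r}$, does not follow from Theorem~\ref{error}: that theorem bounds $\|\bz-\bz_l\|$ (continuous minus discrete mixed solution), not $\|\bw_l-\bz_l\|$; indeed $\bw_l-\bz_l$ is precisely the $\bcurl_{rz}^k$-component of the Helmholtz decomposition and carries no $h_l$-smallness for a general $\bw_l$. The paper instead works only with $\bv_l\in(I-P_{l-1})\bC_l$ (this is all Lemma~\ref{lem:cond} requires, so no multilevel telescoping or geometric series is needed) and proves the two-level estimate of Lemma~\ref{lem2}: writing $\bv_l=\bcurl_{rz}^k\bb_l+\grad\nolimits^k_h d_l$, one has $\|\bb_l\|_{L^2_r}\le CH\|\bv_l\|_{L^2_r}$ and $\|\grad\nolimits^k_h d_l\|_{L^2_r}\le CH\|\bv_l\|_\Lambda$. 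The $H$-smallness comes from the $\Lambda$-orthogonality of $\bv_l$ to $\bC_{l-1}$, and each bound is obtained by a duality argument that combines Theorems~\ref{error}, \ref{concreteD}, and \ref{cc} with Lemma~\ref{lem1} and the continuous embedding of \cite[Theorem~2.10]{FSCM_Maxwell}; it is not a direct application of Theorem~\ref{error} to $\bw_l$.

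Second, your anticipated ``main obstacle'' is a non-issue. Once Lemma~\ref{lem2} gives $\|\bb_l\|_{L^2_r}+\|\grad\nolimits^k_h d_l\|_{L^2_r}\le CH\|\bv_l\|_\Lambda$, the localization to vertex patches is just the trivial basis decomposition in $\bB_l$ and $\bC_l$: finite overlap yields $\sum_j\|\bb_{l,j}\|_{L^2_r}^2\le C\|\bb_l\|_{L^2_r}^2$ (and likewise for the other piece) with no special treatment near $\Gamma_0$, and the weighted inverse inequality then passes to the $\Lambda$-norm, the resulting $h^{-2}$ being cancelled by the $H^2$ from Lemma~\ref{lem2}. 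The $L^2_{r^3}$ machinery lives entirely inside the proofs of Theorems~\ref{concreteD} and \ref{cc}, not in the patch localization.
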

This theorem together with (\ref{e_reduction}) implies that the multigrid V-cycle will converge to the exact solution and that the convergence rate is independent of the meshsize $h$.
The proof of Theorem~\ref{main} is given at the end of the next section after the necessary tools are developed. 

\section{Multigrid Analysis} \label{multigrid2}

This section is devoted to proving Theorem~\ref{main}. Following the standard abstract framework for multigrid analysis~\cite{AFW:2000,BH:1983,mg:book,mg:handbook},
we will verify the two conditions stated in the following lemma. 
We closely follow the steps introduced in \cite{AFW:2000} to accomplish this task. 
All results proved in this section holds true for any fixed integer $|k|\geq 1$, and $P_l$ denote the orthogonal projection onto $\bC_l$ with respect to the $\Lambda(\cdot,\cdot)$-inner product. 
%The analysis provided here holds true for any fixed integer $|k|\geq 1$, so we assume that the Fourier-mode $k$ is fixed.

\begin{lemma}        \label{lem:cond}
Theorem~\ref{main} follows from the
two conditions below:
\begin{enumerate}
\item {\em Existence of a stable decomposition:} There exists a constant
  $C_1 > 0$ independent of the meshsizes and $l$, such that for all $\bv$ 
  in $(I - P_{l-1})\bC_l$, there is a decomposition 
  \[
  \bv = \sum_{j=1}^{N_{l}}\bv_{j}
  \tn{ with } \bv_{j} \in\bC_{l,j},
  \]
  satisfying
  \[
  \sum_{j=1}^{N_{l}} \Lambda(\bv_{j}, \bv_{j}) \leq C_1 \Lambda(\bv, \bv).
  \]

  \item {\em Limited interaction:} There exists a constant $C_2 > 0$,
    independent of $l$, such that
    \[
    \sum_{j=1}^{N_{l}}\sum_{m=1}^{N_{l}}
    |\Lambda(\bv_{j}, \bw_{m})| \leq 
    C_2 
    \left(\sum_{j=1}^{N_{l}}\Lambda(\bv_{j}, \bv_{j})\right)^{\frac{1}{2}}
    \left(\sum_{m=1}^{N_{l}}\Lambda(\bw_{m}, \bw_{m})\right)^{\frac{1}{2}}
    \]
    for all $\bv_{j}\in\bC_{l,j}$, $\bw_{m}\in\bC_{l,m}$, and $l \geq1$. 
\end{enumerate}
\end{lemma}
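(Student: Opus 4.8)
\textit{Proof proposal.} The plan is to run the classical abstract V-cycle argument of \cite{AFW:2000,BH:1983,mg:book}: derive a recursion for the error reduction operator $E_l$ of (\ref{e_reduction}) and prove, by induction on $l$, that
\[
0\le\Lambda(E_l\bu,\bu)\le\delta\,\Lambda(\bu,\bu)\qquad\text{for all }\bu\in\bC_l,
\]
with $\delta:=\dfrac{C_1C_2^2}{1+C_1C_2^2}<1$; since $C_1,C_2$ are, by hypothesis of the lemma, independent of the meshsize and of $l$, so is $\delta$, and Theorem~\ref{main} then follows from (\ref{e_reduction}). Conditions (1) and (2) are consumed exactly once, in a single smoothing--approximation estimate at the end.

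First I would unwind the V-cycle. Writing the pre-smoothing, coarse-grid-correction, and post-smoothing error updates in turn, and using the identity $\Lambda_{l-1}^{-1}Q_{l-1}(\bff-\Lambda_l\bv)=P_{l-1}(\Lambda_l^{-1}\bff-\bv)$ together with $E_1=0$ (the coarsest solve is exact), one obtains
\[
E_l=(I-R_l^t\Lambda_l)\bigl(I-P_{l-1}+E_{l-1}P_{l-1}\bigr)(I-R_l\Lambda_l).
\]
Here $I-R_l\Lambda_l=(I-P_{l,N_l})\cdots(I-P_{l,1})$ is a product of complements of $\Lambda$-orthogonal projections, hence $\|\cdot\|_\Lambda$-nonexpansive, and a one-line computation with the definition of $R_l^t$ as the $(\cdot,\cdot)_r$-transpose shows that $I-R_l^t\Lambda_l$ is the $\Lambda$-adjoint of $I-R_l\Lambda_l$. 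The middle factor $M_l:=I-P_{l-1}+E_{l-1}P_{l-1}$ is $\Lambda$-self-adjoint, equals $I$ on $(I-P_{l-1})\bC_l$ and equals $E_{l-1}$ on $\bC_{l-1}$, so by the inductive hypothesis $0\le\Lambda(M_l\bv,\bv)\le\Lambda(\bv,\bv)$. Because $E_l$ is the $\Lambda$-congruence $(I-R_l\Lambda_l)^{*}M_l(I-R_l\Lambda_l)$, the lower bound $\Lambda(E_l\bu,\bu)\ge0$ is immediate. For the upper bound, set $\bv:=(I-R_l\Lambda_l)\bu$ and $s:=\|\bu\|_\Lambda^2-\|\bv\|_\Lambda^2\ge0$; splitting $\bv$ into its $\Lambda$-orthogonal components in $\bC_{l-1}$ and in $(I-P_{l-1})\bC_l$ and using the inductive bound on $E_{l-1}$ gives
\[
\Lambda(E_l\bu,\bu)=\Lambda(M_l\bv,\bv)\le\delta\|\bv\|_\Lambda^2+(1-\delta)\bigl\|(I-P_{l-1})\bv\bigr\|_\Lambda^2 .
\]
Thus everything reduces to the single estimate
\[
\bigl\|(I-P_{l-1})(I-R_l\Lambda_l)\bu\bigr\|_\Lambda^2\ \le\ C_1C_2^2\Bigl(\|\bu\|_\Lambda^2-\bigl\|(I-R_l\Lambda_l)\bu\bigr\|_\Lambda^2\Bigr)\qquad(\bu\in\bC_l),
\]
for combined with $\dfrac{\delta}{1-\delta}=C_1C_2^2$ the two displays give $\Lambda(E_l\bu,\bu)\le\delta(\|\bv\|_\Lambda^2+s)=\delta\|\bu\|_\Lambda^2$, closing the induction.

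It remains to prove this one estimate, and this is where I expect the real work. Put $\bu_0:=\bu$, $\bu_j:=(I-P_{l,j})\cdots(I-P_{l,1})\bu$, and $w_j:=P_{l,j}\bu_{j-1}=\bu_{j-1}-\bu_j\in\bC_{l,j}$; telescoping $\|\bu_{j-1}\|_\Lambda^2-\|\bu_j\|_\Lambda^2=\|w_j\|_\Lambda^2$ identifies the right-hand side with $C_1C_2^2\sum_{j=1}^{N_l}\|w_j\|_\Lambda^2$. Write $\bw:=(I-P_{l-1})\bu_{N_l}\in(I-P_{l-1})\bC_l$; since $\bw$ is $\Lambda$-orthogonal to $\bC_{l-1}$ we have $\|\bw\|_\Lambda^2=\Lambda(\bw,\bu_{N_l})$. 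Take a stable decomposition $\bw=\sum_m\bw_m$ with $\bw_m\in\bC_{l,m}$ and $\sum_m\|\bw_m\|_\Lambda^2\le C_1\|\bw\|_\Lambda^2$ furnished by condition (1). Using that $\Lambda(\bu_j,\zeta)=0$ for all $\zeta\in\bC_{l,j}$ (because $\bu_j=(I-P_{l,j})\bu_{j-1}$) together with $\bu_{N_l}=\bu_m-\sum_{j>m}w_j$, one gets $\Lambda(\bw_m,\bu_{N_l})=-\sum_{j>m}\Lambda(\bw_m,w_j)$, hence $\|\bw\|_\Lambda^2=-\sum_m\sum_{j>m}\Lambda(\bw_m,w_j)$. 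Condition (2), applied with the families $\{w_j\}$ and $\{\bw_m\}$, bounds $\bigl|\sum_{m,j}\Lambda(\bw_m,w_j)\bigr|$ by $C_2\bigl(\sum_m\|\bw_m\|_\Lambda^2\bigr)^{1/2}\bigl(\sum_j\|w_j\|_\Lambda^2\bigr)^{1/2}\le C_2\sqrt{C_1}\,\|\bw\|_\Lambda\bigl(\sum_j\|w_j\|_\Lambda^2\bigr)^{1/2}$, and cancelling a factor $\|\bw\|_\Lambda$ yields $\|\bw\|_\Lambda^2\le C_1C_2^2\sum_j\|w_j\|_\Lambda^2$, which is the claim.

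The main obstacle is precisely this last estimate: it is the only point at which both hypotheses enter, and the bookkeeping must line up exactly — the telescoping identity, the orthogonality $\Lambda(\bu_j,\cdot)\equiv0$ on $\bC_{l,j}$, and above all the fact that condition (1) has to be applied to the \emph{post-smoothed} quantity $(I-P_{l-1})\bu_{N_l}$ rather than to $\bu$ itself. A secondary, purely organizational point is to justify the recursion for $E_l$ and to verify carefully that the post-smoothing step built from $R_l^t$ is genuinely the $\Lambda(\cdot,\cdot)$-adjoint of the pre-smoothing step, so that $E_l$ really is a $\Lambda$-congruence of a $\Lambda$-nonnegative operator and the induction keeping $\delta<1$ goes through.
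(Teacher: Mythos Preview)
Your proof is correct. Note, however, that the paper does not actually prove Lemma~\ref{lem:cond}: it is stated as a known consequence of the abstract multigrid framework, with the citations \cite{AFW:2000,BH:1983,mg:book,mg:handbook} standing in for the proof; the paper's effort goes entirely into \emph{verifying} conditions (1) and (2) via Lemma~\ref{lem2}. What you have written is a faithful and complete unfolding of that abstract argument --- the recursion $E_l=(I-R_l^t\Lambda_l)(I-P_{l-1}+E_{l-1}P_{l-1})(I-R_l\Lambda_l)$, the $\Lambda$-congruence structure, the induction on $l$ with $\delta=C_1C_2^2/(1+C_1C_2^2)$, and the smoothing--approximation estimate obtained by telescoping and invoking the two hypotheses --- exactly as in the cited references. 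So your proposal and the paper are consistent: you supply the details the paper outsources, and the paper supplies the verification of the hypotheses that your argument consumes.
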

The two conditions in Lemma~\ref{lem:cond} are proved at the end of this section.

It was shown in \cite{O:2015} that the following family of Fourier finite element spaces form an exact sequence:
\begin{equation} \label{exactt}
0 \rightarrow A_h \rightarrow \bB_h \rightarrow \bC_h \rightarrow D_h \rightarrow 0.
\end{equation}

Let $\grad\nolimits^k_h: D_h \rightarrow \bC_h$ denote the $L_r^2$-adjoint of the map of $-\dive\nolimits_{rz}^k: \bC_h\rightarrow D_h$, i.e.,
\[
(-\dive\nolimits_{rz}\bv_h, s_h)_r = (\bv_h, \grad\nolimits^k_hs_h)_r \quad\tn{ for all } \bv_h\in \bC_h \tn{ and } s_h\in D_h.
\]
Then, due to the exactness of (\ref{exactt}),
we have the following discrete Helmholtz decomposition \cite{O:2015} and \cite[page 10]{O:2019}:
\begin{equation} \label{disc_helm}
\bC_h = \curl_{rz}^k\bB_h + \grad\nolimits^k_hD_h.
\end{equation}

When discussing two consecutive level meshes and the corresponding finite element spaces $\bC_{l-1}$ and $\bC_{l}$, for example, we will use $H$ to denote the meshsize on $\bC_{l-1}$
and $h$ to denote the meshsize on $\bC_{l}$. Since we obtain the next level mesh by connecting the midpoints of all edges in the previous level mesh,
we have that $H=2h$. $\bC_H$ and $\bC_h$ will often be used in place of $\bC_{l-1}$ and $\bC_{l}$ respectively. $D_H$ and $D_h$ are connected in a similar way.  
Recall that $\Pi_h^S$ is the $L^2_r$-orthogonal projection on to $D_h$, and similarly $\Pi_H^S$ is the $L^2_r$-orthogonal projection on to $D_H$.
With this notation, we have the following lemma.

\begin{lemma} \label{lem1}
For all $p_h\in D_h$, we have 
\[
\|p_h-\Pi_H^Sp_h\|_{\Lr} \leq CH\| \grad\nolimits^k_hp_h \|_{\Lr}
\]
\end{lemma}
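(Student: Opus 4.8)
The plan is to establish this by a duality argument based on the weighted mixed problem $(\ref{mixed1})$; the key point will be that the extra power of $H$ is produced by combining the elliptic regularity of Theorem~\ref{mixed_reg} with the interpolation estimate of Theorem~\ref{concreteD}, applied on both the coarse mesh and the fine mesh.

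Concretely, I would set $f := p_h - \Pi_H^S p_h \in D_h$. Since $\Pi_H^S$ is the $L^2_r$-orthogonal projection onto $D_H$, we have $(f,s_H)_r = 0$ for all $s_H \in D_H$, and in particular $\|f\|_{\Lr}^2 = (f,p_h)_r$. Next, let $(\bz,p)\in\bH_r(\dive\nolimits^k,\Omega)\times\Lr$ solve $(\ref{mixed1})$ with data $f$, so that $\dive\nolimits_{rz}^k\bz = f$ in $\Lr$; since $\breve{\Omega}$ is convex, Theorem~\ref{mixed_reg} gives $\bz\in\bH^{1\star}_r(\Omega)$ with $|\bz_{rz}|_{H^1_r(\Omega)} + \|kz_\theta - z_r\|_{\tilde{H}^1_r(\Omega)} \le C\|f\|_{\Lr}$. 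Put $\bz_h := \Pi_h^{d,k}\bz \in \bC_h$ and $\bz_H := \Pi_H^{d,k}\bz \in \bC_H \subset \bC_h$. Because $\Pi_h^{o,k}$ and $\Pi_H^{o,k}$ are the $L^2_r$-orthogonal projections onto $D_h$ and $D_H$, the commuting diagram of Theorem~\ref{ProjSet},~item~(\ref{comm}) yields $\dive\nolimits_{rz}^k\bz_h = \Pi_h^{o,k}f = f$ (since $f\in D_h$) and $\dive\nolimits_{rz}^k\bz_H = \Pi_H^{o,k}f = 0$ (since $f\perp D_H$). Using the definition of $\grad\nolimits^k_h$ as the $L^2_r$-adjoint of $-\dive\nolimits_{rz}^k$,
\[
\|f\|_{\Lr}^2 = (f,p_h)_r = (\dive\nolimits_{rz}^k\bz_h, p_h)_r = -(\bz_h,\grad\nolimits^k_h p_h)_r,
\qquad (\bz_H,\grad\nolimits^k_h p_h)_r = -(\dive\nolimits_{rz}^k\bz_H, p_h)_r = 0,
\]
so $\|f\|_{\Lr}^2 = -(\bz_h - \bz_H,\grad\nolimits^k_h p_h)_r \le \|\bz_h - \bz_H\|_{\Lr}\,\|\grad\nolimits^k_h p_h\|_{\Lr}$. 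Finally I would estimate, using Theorem~\ref{concreteD} on each level, then Theorem~\ref{mixed_reg}, and $h\le H$,
\[
\|\bz_h - \bz_H\|_{\Lr} \le \|\bz - \Pi_h^{d,k}\bz\|_{\Lr} + \|\bz - \Pi_H^{d,k}\bz\|_{\Lr} \le C(h+H)\big(|\bz_{rz}|_{H^1_r(\Omega)} + \|kz_\theta - z_r\|_{\tilde{H}^1_r(\Omega)}\big) \le CH\|f\|_{\Lr},
\]
and divide through by $\|f\|_{\Lr}$ to conclude.

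The hard part is really the supply of regularity, not the algebra: the gain of a full power of $H$ rests entirely on the $H^1_r$-type bound for the mixed solution $\bz$ in Theorem~\ref{mixed_reg}, and this is the only place convexity of $\breve{\Omega}$ enters the argument; without it one would only recover the unconditional (and useless-for-multigrid) bound $\|f\|_{\Lr} \le C\|\grad\nolimits^k_h p_h\|_{\Lr}$, since $\|\bz_h - \bz_H\|_{\Lr} \le C\|\bz\|_{\Lr} \le C\|f\|_{\Lr}$ always holds. Two small points would still need a line of justification: (i) that the solution $\bz$ of $(\ref{mixed1})$ actually lies in the space $\bH^{1\star}_r(\Omega)$ required by Theorem~\ref{concreteD} — this follows from Theorem~\ref{mixed_reg} because $z_\theta = \frac{1}{k}\big((kz_\theta - z_r) + z_r\big) \in H^1_r(\Omega)$ (where $|k|\ge 1$ is used) and $kz_\theta - z_r \in L^2_{1/r}(\Omega)$; and (ii) that $\dive\nolimits_{rz}^k\bz_H = \Pi_H^{o,k}f = 0$, which uses that $\Pi_H^{o,k}$ is the $L^2_r$-orthogonal projection onto $D_H$ (as constructed in \cite{O:2019}) and hence annihilates the $L^2_r$-orthogonal complement of $D_H$, to which $f$ belongs.
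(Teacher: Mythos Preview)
Your argument is correct and takes a genuinely different route from the paper's. The paper sets $f=-\dive\nolimits_{rz}^k\grad\nolimits_h^kp_h$, identifies $(-\grad\nolimits_h^kp_h,p_h)$ as the fine-level discrete mixed solution, and then works on the \emph{scalar} side: it splits $\|p_h-\Pi_H^Sp_h\|_{\Lr}$ through the continuous $p$, uses $\|p-\Pi_H^Sp\|_{\Lr}\le CH|p|_{H^1_r(\Omega)}$ together with $|p|_{H^1_r(\Omega)}\le\|\grad\nolimits_{rz}^{k*}p\|_{\Lr}\le\|\grad\nolimits_h^kp_h\|_{\Lr}$, and finally controls $\|p_h-\Pi_h^Sp\|_{\Lr}$ via the superconvergence estimate of Theorem~\ref{error} combined with an inverse inequality. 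Your duality argument instead takes $f=p_h-\Pi_H^Sp_h$, works entirely on the \emph{vector} side, and extracts the factor $H$ directly from Theorem~\ref{concreteD} and Theorem~\ref{mixed_reg} applied to $\bz$, without ever invoking Theorem~\ref{error} or an inverse inequality. Both proofs ultimately rest on the same regularity and interpolation ingredients; yours assembles them more directly.

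One caveat on your point~(ii): the paper deliberately introduces a \emph{separate} symbol $\Pi_h^S$ for the $L^2_r$-orthogonal projection onto $D_h$, distinct from $\Pi_h^{o,k}$, and smoothed commuting projections of Sch\"oberl type are generally not orthogonal at the end of the complex. If $\Pi_H^{o,k}$ is not the $L^2_r$-orthogonal projection, then $\Pi_H^{o,k}f$ need not vanish even though $f\perp D_H$. The repair is painless: replace $\bz_H=\Pi_H^{d,k}\bz$ by the coarse discrete mixed solution with data $f$, so that $\dive\nolimits_{rz}^k\bz_H=\Pi_H^Sf=0$ holds by construction, and use Theorem~\ref{error} on the coarse mesh to obtain $\|\bz-\bz_H\|_{\Lr}\le CH\|f\|_{\Lr}$; the remainder of your argument goes through verbatim.
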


\begin{proof}
Given $p_h\in D_h$, define $\bz\in \bH_{r}(\dive\nolimits^k,\Omega)$ and $p\in L^2_r(\Omega)$ as the solution of (\ref{mixed1}) with $f=-\dive\nolimits_{rz}^k \grad\nolimits^k_hp_h\in D_h$.
Then, $\bz_h= -\grad\nolimits^k_hp_h\in \bC_h$ and $p_h\in D_h$ will be the solution of (\ref{mixed_disc}) with the same $f$. %In other words,
%\begin{align*}
%(\bz_h,\bC_h)_r - (p_h, \dive\nolimits_{rz}^k\bC_h) &= 0 &&\tn{ for all } \bC_h\in\bC_h, \\
%(\dive\nolimits_{rz}^k\bz_h, s_h)_r &= (f,s_h)_r &&\tn{ for all } s_h\in D_h.
%\end{align*}
By triangle inequality, we have
\begin{equation} \label{lemboo0}
\|p_h-\Pi_H^Sp_h\|_{\Lr} \leq \|p_h-p\|_{\Lr} + \|p-\Pi_H^Sp\|_{\Lr} + \|\Pi_H^Sp-\Pi_H^Sp_h\|_{\Lr}. 
\end{equation}
Let us first prove that 
\begin{equation} \label{lemboo1}
\|p-\Pi_H^Sp\|_{\Lr} \leq CH\| \grad\nolimits^k_hp_h \|_{\Lr}.
\end{equation}
It is known in the literature \cite[Lemma 5]{BBD:2006} that 
\begin{equation} \label{lemboo2}
\|p-\Pi_H^Sp\|_{\Lr} \leq CH|p|_{H^1_r(\Omega)}.
\end{equation}
By definition of $\grad\nolimits_{rz}^{k*}$, it is clear that 
\begin{equation} \label{lemboo3}
|p|_{H^1_r(\Omega)} \leq \|\grad\nolimits_{rz}^{k*}p\|_{\Lr}.
\end{equation}
Furthermore, we have 
\begin{align*}
 \|\grad\nolimits_{rz}^{k*}p\|_{\Lr}^2 &= (\grad\nolimits_{rz}^{k*}p, -\bz)_r, \\
 &= (p, \dive\nolimits_{rz}^k\bz)_r, \\
 &= (p, \dive\nolimits_{rz}^k\bz_h)_r &&\tn{ since } f\in D_h, \\
 &= (\grad\nolimits_{rz}^{k*}p, -\bz_h)_r, \\
 &=  (\grad\nolimits_{rz}^{k*}p, \grad\nolimits^k_hp_h)_r.
\end{align*}
Therefore, 
\begin{equation} \label{lemboo4}
 \|\grad\nolimits_{rz}^{k*}p\|_{\Lr} \leq \|  \grad\nolimits^k_hp_h \|_{\Lr},
\end{equation}
and thus (\ref{lemboo1}) is proved by (\ref{lemboo2})--(\ref{lemboo4}).

To bound the first term appearing on the right hand side of (\ref{lemboo0}),
we will next show that
\begin{equation} \label{lemboo5}
 \|p_h-p\|_{\Lr} \leq Ch\| \grad\nolimits^k_hp_h \|_{\Lr}.
\end{equation}
Since orthogonal projections have unit norm, % we have
%\[
 %\|\Pi_H^Sp-\Pi_H^Sp_h\|_{\Lr} \leq  \|p-p_h\|_{\Lr}.
%\]
%Therefore, 
(\ref{lemboo5}) will also prove that
\begin{equation} \label{lemboo6}
 \|\Pi_H^Sp-\Pi_H^Sp_h\|_{\Lr} \leq Ch\| \grad\nolimits^k_hp_h \|_{\Lr}.
\end{equation}

First of all, by Theorem~\ref{error}, we have that
\begin{equation} \label{lemboo7}
\|p_h-\Pi_h^Sp\|_{\Lr} \leq Ch^2\|\dive\nolimits_{rz}^k\grad\nolimits^k_h p_h \|_{\Lr} \leq Ch\|\grad\nolimits^k_h p_h \|_{\Lr}.
\end{equation}
The second inequality above follows from known inverse inequalities in weighted spaces \cite[Proposition 2.1]{GO:2012} and \cite[Proposition 5.1]{O:2019}.
Therefore, by (\ref{lemboo7}) and (\ref{lemboo1}), we have
\[
 \|p_h-p\|_{\Lr} \leq  \|p_h-\Pi_h^Sp\|_{\Lr} + \|\Pi_h^Sp-p\|_{\Lr} \leq Ch\|\grad\nolimits^k_h p_h \|_{\Lr}. 
\]
Since $H\leq Ch$, the proof is complete by
 (\ref{lemboo0}), (\ref{lemboo1}), (\ref{lemboo5}), and (\ref{lemboo6}).
\end{proof}

Let $P_H: \bC_h \rightarrow \bC_H$ denote the $\Lambda$-orthogonal projection. The following lemma is crucial in proving the uniform convergence of the multigrid algorithm.
\begin{lemma} \label{lem2}
Let $\bw_h\in \bC_h$. Let 
\[
\bw_h - P_H\bw_h = \curl_{rz}^k\bb_h + \grad\nolimits^k_hd_h
\]
be the discrete Helmholtz decomposition of $\bw_h - P_H\bw_h$ with $\bb_h\in\bB_h$ and $d_h\in D_h$.
Then
\begin{align*}
\| \bb_h \|_{\Lr} &\leq CH\| \bw_h - P_H\bw_h \|_{\Lr}, \\
\| \grad\nolimits^k_hd_h \|_{\Lr} &\leq CH\| \bw_h - P_H\bw_h \|_{\Lambda}.
\end{align*}
\end{lemma}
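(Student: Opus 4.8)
The plan is to adapt the abstract multigrid machinery of \cite{AFW:2000} to the present weighted setting, using the commuting--projection error estimates of Section~\ref{concrete} and the regularity of Theorem~\ref{mixed_reg} to control the weights. Write $\bv:=\bw_h-P_H\bw_h$. Since $P_H$ is the $\Lambda$-orthogonal projection, $\Lambda(\bv,\bw_H)=0$ for all $\bw_H\in\bC_H$; choosing $\bw_H\in\curl^k_{rz}\bB_H$ and using $\dive\nolimits^k_{rz}\curl^k_{rz}=0$ shows that $\bv$ is $\Lr$-orthogonal to $\curl^k_{rz}\bB_H$. Likewise, because $\grad\nolimits^k_h$ is the $\Lr$-adjoint of $-\dive\nolimits^k_{rz}$ and $\dive\nolimits^k_{rz}\curl^k_{rz}=0$, the decomposition $(\ref{disc_helm})$ is $\Lr$-orthogonal, so $\curl^k_{rz}\bb_h$ and $\grad\nolimits^k_h d_h$ are mutually $\Lr$-orthogonal and $\curl^k_{rz}\bb_h\perp_{\Lr}\curl^k_{rz}\bB_H$. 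I fix the gauge $\bb_h\perp_{\Lr}\grad\nolimits^k_{rz}A_h$ (the kernel of $\curl^k_{rz}$ on $\bB_h$, by exactness of $(\ref{exactt})$), which is the choice implicit in the statement.

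\emph{The estimate for $\bb_h$.} I would argue by duality, writing $\|\bb_h\|_{\Lr}=\sup\{(\bb_h,\bg)_r:\bg\in\bL^2_r(\Omega),\ \|\bg\|_{\Lr}=1\}$. Using the continuous Helmholtz decomposition of $\bg$ together with the gauge condition on $\bb_h$ and the commuting--interpolant approximation, one reduces to test functions $\bg=\curl^{k*}_{rz}\boldsymbol{\sigma}$ with $\boldsymbol{\sigma}$ divergence-free. Since $\breve\Omega$ is convex, such fields lie in $\bH^1_{(k)}(\Omega)\subset\bH^{1\star}_r(\Omega)$ with $\|\boldsymbol{\sigma}\|_{\bH^{1\star}_r(\Omega)}\le C\|\bg\|_{\Lr}$ (the embedding $\bX_{(k)}(\Omega)\hookrightarrow\bH^1_{(k)}(\Omega)$ of \cite{FSCM_Maxwell} already used in the proof of Theorem~\ref{mixed_reg}). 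By the adjoint relation for $\curl^k_{rz}$ and the commuting diagram of Theorem~\ref{ProjSet} (which gives $\dive\nolimits^k_{rz}\Pi_H^{d,k}\boldsymbol{\sigma}=0$, so $\Pi_H^{d,k}\boldsymbol{\sigma}\in\curl^k_{rz}\bB_H$), the orthogonality of $\curl^k_{rz}\bb_h$ to $\curl^k_{rz}\bB_H$ removes the coarse part:
\[
(\bb_h,\bg)_r=(\curl^k_{rz}\bb_h,\boldsymbol{\sigma})_r=(\curl^k_{rz}\bb_h,\boldsymbol{\sigma}-\Pi_H^{d,k}\boldsymbol{\sigma})_r\le\|\bv\|_{\Lr}\,\|\boldsymbol{\sigma}-\Pi_H^{d,k}\boldsymbol{\sigma}\|_{\Lr}.
\]
Theorem~\ref{concreteD} bounds the last factor by $CH\|\boldsymbol{\sigma}\|_{\bH^{1\star}_r(\Omega)}\le CH\|\bg\|_{\Lr}$, and taking the supremum yields $\|\bb_h\|_{\Lr}\le CH\|\bv\|_{\Lr}$.

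\emph{The estimate for $\grad\nolimits^k_h d_h$.} Observe that $(\grad\nolimits^k_h d_h,-d_h)$ solves the discrete mixed problem $(\ref{mixed_disc})$ with data $f=g_h:=\dive\nolimits^k_{rz}\bv\in D_h$; let $(\bz,p)$ with $\bz=-\grad\nolimits^{k*}_{rz}p$ solve the continuous problem $(\ref{mixed1})$ with the same $f$. Convexity and Theorem~\ref{mixed_reg} give $\|\bz\|_{\bH^{1\star}_r(\Omega)}\le C\|g_h\|_{\Lr}$, and Theorem~\ref{error} gives $\|\bz-\grad\nolimits^k_h d_h\|_{\Lr}\le Ch\|g_h\|_{\Lr}$ and $\|\Pi_h^S p+d_h\|_{\Lr}\le Ch^2\|g_h\|_{\Lr}$. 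Write $\|\grad\nolimits^k_h d_h\|_{\Lr}^2=(\grad\nolimits^k_h d_h,\bv)_r=(\bz,\bv)_r+(\grad\nolimits^k_h d_h-\bz,\bv)_r$. Both $\grad\nolimits^k_h d_h$ and $\bz=-\grad\nolimits^{k*}_{rz}p$ are $\Lr$-orthogonal to $\curl^k_{rz}\bb_h$, so the second term equals $(\grad\nolimits^k_h d_h-\bz,\grad\nolimits^k_h d_h)_r\le Ch\|g_h\|_{\Lr}\|\grad\nolimits^k_h d_h\|_{\Lr}$. In the first term, inserting $\Pi_H^{d,k}\bz\in\bC_H$ and using the $\Lambda$-orthogonality of $\bv$ together with the commuting diagram gives $(\Pi_H^{d,k}\bz,\bv)_r=-\|\Pi_H^S g_h\|_{\Lr}^2\le0$, hence $(\bz,\bv)_r\le(\bz-\Pi_H^{d,k}\bz,\bv)_r$; splitting $\bv=\grad\nolimits^k_h d_h+\curl^k_{rz}\bb_h$, the $\grad\nolimits^k_h d_h$-part is bounded by $CH\|g_h\|_{\Lr}\|\grad\nolimits^k_h d_h\|_{\Lr}$ via Theorem~\ref{concreteD}, while the $\curl^k_{rz}\bb_h$-part is controlled by combining $(\curl^k_{rz}\bb_h,\bz)_r=0$, Lemma~\ref{lem1}, and the $\bb_h$-estimate just obtained. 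Collecting terms, using $\|g_h\|_{\Lr}\le\|\bv\|_\Lambda$, a Young inequality to absorb the $\|\grad\nolimits^k_h d_h\|_{\Lr}$ factors, and $h\le H$, one arrives at $\|\grad\nolimits^k_h d_h\|_{\Lr}\le CH\|\bv\|_\Lambda$.

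The main obstacle is producing the genuine power of $H$: the naive bounds contain no mesh gain, and the whole point is to use the $\Lambda$-orthogonality of $\bv$ to $\bC_H$ --- equivalently the $\Lr$-orthogonality of $\curl^k_{rz}\bb_h$ to $\curl^k_{rz}\bB_H$ --- to annihilate the leading contributions, leaving only the $O(H)$ error of the coarse commuting interpolant. The genuinely new difficulty relative to \cite{AFW:2000} is that these interpolation estimates (Theorems~\ref{concreteD} and \ref{cc}) require the regular dual fields to lie in the weighted spaces $\bH^{1\star}_r(\Omega)$, resp.\ $\bH^{1\diamond}_{r,k}(\Omega)$, which is exactly what Theorem~\ref{mixed_reg} and the regularity of \cite{FSCM_Part2,FSCM_Maxwell} on convex $\breve\Omega$ supply, and that the $1/r$ factors in the degrees of freedom must be absorbed through the $L^2_{r^3}$-based constructions of Section~\ref{concrete}; a secondary technicality is the reduction, in the duality argument, to divergence-free test potentials.
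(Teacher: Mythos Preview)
Your outline follows \cite{AFW:2000} in spirit, but both halves have genuine gaps, and the paper's argument differs from yours in exactly the places where your sketch becomes vague.

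\textbf{The $\bb_h$ estimate.} The step ``one reduces to test functions $\bg=\bcurl^{k*}_{rz}\boldsymbol{\sigma}$'' is not justified. Your gauge $\bb_h\perp_{\Lr}\grad^k_{rz}A_h$ is a \emph{discrete} orthogonality; for a generic $\bg\in\bL^2_r(\Omega)$ with Hodge decomposition $\bg=\grad^k_{rz}\phi+\bcurl^{k*}_{rz}\boldsymbol{\sigma}$, the pairing $(\bb_h,\grad^k_{rz}\phi)_r$ need not vanish, and writing $(\bb_h,\grad^k_{rz}\phi-\Pi_h^{c,k}\grad^k_{rz}\phi)_r$ gains no factor of $h$ because $\phi$ carries no extra regularity. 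The paper does not argue by duality on $\bb_h$ directly. Instead it introduces the continuous lift $\bb=\bcurl^{k*}_{rz}\br$ with $\bcurl^k_{rz}\bb=\bcurl^k_{rz}\bb_h$ and $\dive^k_{rz}\br=0$ (a $\mathfrak B^k$-problem), proves $\|\bb_h-\bb\|_{\Lr}\le Ch\|\bcurl^k_{rz}\bb_h\|_{\Lr}$ via Theorem~\ref{cc}, and then bounds $\|\bb\|_{\Lr}$ using $(\bcurl^k_{rz}\bb_h,\br-\Pi_H^{d,k}\br)_r$ together with Theorem~\ref{concreteD} and the embedding of \cite{FSCM_Maxwell}. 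The passage through the continuous $\bb$ is precisely what absorbs the ``gradient part'' you tried to discard.

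\textbf{The $\grad^k_h d_h$ estimate.} Two concrete problems. First, your identity $(\Pi_H^{d,k}\bz,\bv)_r=-\|\Pi_H^S g_h\|_{\Lr}^2$ uses $\dive^k_{rz}\Pi_H^{d,k}\bz=\Pi_H^S g_h$, but the commuting diagram only gives $\dive^k_{rz}\Pi_H^{d,k}\bz=\Pi_H^{o,k}g_h$, and $\Pi_H^{o,k}$ is not the $L^2_r$-orthogonal projection; the sign conclusion is therefore unproven. Second, the ``$\curl^k_{rz}\bb_h$-part'' you defer to Lemma~\ref{lem1} and the $\bb_h$-estimate reduces to $-(\Pi_H^{d,k}\bz,\curl^k_{rz}\bb_h)_r$, which is bounded by $\|\Pi_H^{d,k}\bz\|_{\Lr}\|\bv\|_{\Lr}$ with no $H$-gain; I do not see how Lemma~\ref{lem1} helps here. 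The paper avoids all of this by a different choice of auxiliary function: it sets $\bz_h\in\bC_h$ via $\Lambda(\bz_h,\cdot)=(\grad^k_h d_h,\cdot)_r$ (so $\bz_h=\Lambda_h^{-1}\grad^k_h d_h$, \emph{not} $\grad^k_h d_h$ itself), which yields the clean identity $\|\grad^k_h d_h\|_{\Lr}^2=\Lambda(\bz_h-\bz_H,\bv)$ after subtracting the coarse analogue $\bz_H$. The remaining task, bounding $\|\bz_h-\bz_H\|_{\Lambda}$ by $CH\|\grad^k_h d_h\|_{\Lr}$, is done by threading through two continuous mixed problems $(\bz,p)$ and $(\tilde\bz,\tilde p)$ and invoking Theorem~\ref{error} and Lemma~\ref{lem1}; no sign argument or cross-term splitting is needed.
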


\begin{proof}
Define $\bz_h\in \bC_h$ to be the solution to 
\[
\Lambda(\bz_h, \bq_h) = (\grad\nolimits^k_hd_h, \bq_h)_r \tn{ for all } \bq_h\in \bC_h.
\]
It follows that
\begin{equation} \label{ggg}
%\bz_h \perp_{\Lambda} \mathrm{range}(\curl_{rz}^k) \subset \bC_h,
(\bz_h,\curl_{rz}^k\bv_h) _{L^2_r(\Omega)} = \Lambda(\bz_h,\curl_{rz}^k\bv_h) = 0 \quad\tn{ for all } \bv_h\in \bB_h,
\end{equation}
so $\bz_h=-\grad\nolimits^k_hp_h$ for some $p_h\in D_h$ by (\ref{disc_helm}).
Then
\begin{center}
$(\bz_h,p_h)\in \bC_h\times D_h$ is the solution of (\ref{mixed_disc}) with $f=\dive\nolimits_{rz}^k\bz_h\in D_h$.
\end{center}
Now, let
\begin{center} 
$(\bz,p)\in \hdivk \times \Lr$ be the solution of (\ref{mixed1}) with $f=\dive\nolimits_{rz}^k\bz_h\in D_h$,
\end{center}
and
\begin{center}
$(\bz_H,p_H)\in \bC_H\times D_H$ be the solution of (\ref{mixed_disc}) with $f=\dive\nolimits_{rz}^k\bz_h\in D_h$.
\end{center}
Note that %since $f\in D_h$,
\begin{equation} \label{f1}
\dive\nolimits_{rz}^k\bz=\dive\nolimits_{rz}^k\bz_h,
\end{equation}
and 
\begin{equation} \label{f3}
\dive\nolimits_{rz}^k\bz_H=\Pi_H^S\dive\nolimits_{rz}^k\bz_h.
\end{equation}
%Notice that $f = \dive\nolimits_{rz}^k\bz_h  \not\in D_H$.
We further define 
\begin{center} 
$(\tilde{\bz},\tilde{p})\in \hdivk \times \Lr$ to be the solution of (\ref{mixed1}) with $f=\dive\nolimits_{rz}^k\bz_H\in D_H$.
\end{center}
In this case, since $f\in D_H$,
\begin{equation} \label{f2}
\dive\nolimits_{rz}^k\tilde{\bz}=\dive\nolimits_{rz}^k\bz_H.
\end{equation}
Then, 
\begin{equation} \label{lemb0}
\begin{aligned}
\| \grad\nolimits^k_hd_h \|_{\Lr}^2 &= \Lambda(\bz_h, \grad\nolimits^k_hd_h), \\
&= \Lambda(\bz_h, \bw_h - P_H\bw_h) &&\tn{ by (\ref{ggg}), }  \\
&=  \Lambda(\bz_h-\bz_H, \bw_h - P_H\bw_h), \\
&\leq \|\bz_h-\bz_H \|_{\Lambda} \|\bw_h - P_H\bw_h \|_{\Lambda}. 
\end{aligned}
\end{equation}
The second inequality of the lemma is proved once we show that
\[
\|\bz_h-\bz_H \|_{\Lambda} \leq CH\| \grad\nolimits^k_hd_h \|_{\Lr}.
\]
To do this, let us first consider $\|\bz-\tilde{\bz} \|_{\Lr}$:
\begin{align*}
\|\bz-\tilde{\bz}\|_{\Lr}^2 &= (\bz-\tilde{\bz},\bz-\tilde{\bz})_r, \\
&= -(\grad\nolimits_{rz}^{k*}p - \grad\nolimits_{rz}^{k*}\tilde{p}, \bz-\tilde{\bz})_r, \\
&= (p-\tilde{p}, \dive\nolimits_{rz}^{k}\bz-\dive\nolimits_{rz}^{k}\tilde{\bz}), \\
&= (p-\tilde{p}, \dive\nolimits_{rz}^{k}\bz_h-\dive\nolimits_{rz}^{k}\bz_H) &&\tn{ by (\ref{f1}) and (\ref{f2}), } \\
&= (p-\tilde{p}, \dive\nolimits_{rz}^{k}\bz_h-\Pi_H^S\dive\nolimits_{rz}^{k}\bz_h) &&\tn{ by (\ref{f3}), } \\
&= ((p-\tilde{p}) - \Pi_H^S(p-\tilde{p}), \dive\nolimits_{rz}^{k}\bz_h-\Pi_H^S\dive\nolimits_{rz}^{k}\bz_h), \\ % &&\tn{ since } \Pi_H^S(p-\tilde{p})\in D_H, \\
&\leq CH|p-\tilde{p}|_{H^1_r(\Omega)}\|\dive\nolimits_{rz}^{k}\bz_h-\Pi_H^S\dive\nolimits_{rz}^{k}\bz_h\|_{\Lr},  \\ % &&\tn{ by \cite[Lemma 5]{BBD:2006}, } \\
&\leq CH|p-\tilde{p}|_{H^1_r(\Omega)}\|\dive\nolimits_{rz}^{k}\bz_h\|_{\Lr},  \\ %&&\tn{ since $\Pi_H^S$ has unit norm, } \\
&\leq CH\|\bz-\tilde{\bz}\|_{\Lr}\|\dive\nolimits_{rz}^{k}\bz_h\|_{\Lr} &&\tn{ since } \bz-\tilde{\bz}=-\grad\nolimits_{rz}^{k*}(p-\tilde{p}).
\end{align*}
Therefore, we have
\begin{equation}  \label{f4}
\|\bz-\tilde{\bz}\|_{\Lr} \leq CH\|\dive\nolimits_{rz}^{k}\bz_h\|_{\Lr},
\end{equation}
and so we have
\begin{equation} \label{f5}
\begin{aligned}
\|\bz_h-\bz_H\|_{\Lr} &\leq \|\bz_h-\bz\|_{\Lr} + \|\bz-\tilde{\bz}\|_{\Lr}+\|\tilde{\bz}-\bz_H\|_{\Lr}, \\ 
&\leq Ch\|\dive\nolimits_{rz}^{k}\bz_h\|_{\Lr} + CH\|\dive\nolimits_{rz}^{k}\bz_h\|_{\Lr} + CH\|\dive\nolimits_{rz}^{k}\bz_H\|_{\Lr} \\ &\quad\quad\tn{ by Theorem~\ref{error} and (\ref{f4}), } \\
&\leq CH\|\dive\nolimits_{rz}^{k}\bz_h\|_{\Lr} &&\tn{ by (\ref{f3}). }
\end{aligned}
\end{equation}
Hence,
\begin{equation} \label{f8}
\begin{aligned}
\|\bz_h-\bz_H\|_\Lambda^2 &=  \|\bz_h-\bz_H\|^2_{\Lr} +  \|\dive\nolimits_{rz}^{k}(\bz_h-\bz_H)\|^2_{\Lr}, \\
&\leq CH^2\|\dive\nolimits_{rz}^{k}\bz_h\|_{\Lr}^2 + \|\dive\nolimits_{rz}^{k}\bz_h - \Pi_H^S \dive\nolimits_{rz}^{k}\bz_h\|_{\Lr}^2 &&\tn{ by (\ref{f5}) and (\ref{f3})}, \\
&\leq CH^2(\|\dive\nolimits_{rz}^{k}\bz_h\|_{\Lr}^2 + \| \grad\nolimits^k_h\dive\nolimits_{rz}^{k}\bz_h\|^2_{\Lr}) &&\tn{ by Lemma~\ref{lem1}, } \\
&\leq CH^2\|\grad\nolimits^k_hd_h\|_{\Lr}^2.
\end{aligned}
\end{equation}
The last inequality above holds, since
\[
\|\dive\nolimits_{rz}^{k}\bz_h\|_{\Lr}^2 + \| \grad\nolimits^k_h\dive\nolimits_{rz}^{k}\bz_h\|^2_{\Lr} \leq \|\Lambda_h\bz_h\|_{\Lr}^2 \leq \|\grad\nolimits^k_hd_h\|_{\Lr}^2.
\]
We then reach the second inequality of the Lemma by (\ref{f8}) and (\ref{lemb0}).

Now let us prove the first inequality of the Lemma. To do so, let $\bF=\bcurl\nolimits^k_{rz}\bb_h$ and consider the following boundary value problem:
\begin{equation} \label{bdy}
\begin{aligned}
\bb &=\bcurl\nolimits^{k*}_{rz}\br, \\
\bcurl\nolimits^k_{rz}\bb &= \bF, \\
\dive\nolimits_{rz}^k\br &=0, \\
\br_{rz} \cdot \bt &= 0 \tn{ and } r_\theta = 0 &&\tn{ on $\Gamma_1$. }
\end{aligned}
\end{equation}
This is a subproblem of one of the weighted Hodge Laplacian problems studied in \cite{O:2019}. Such subproblems were called $\mathfrak{B}^k$ problems in \cite{AFW:2010}.
%%%%%%
\begin{comment}
Its mixed formulation can be written in the following way: find $(\bb,\br)\in\hcurlk\times \bcurl\nolimits^k_{rz}\hcurlk$ that satisfies
\begin{equation} \label{mixed2}
\begin{aligned}
(\bb,\bv)_r - (\bcurl\nolimits^k_{rz}\bv, \br)_r &= 0 &&\tn{ for all } \bv\in\hcurlk, \\
(\bcurl\nolimits^k_{rz}\bb, \bC)_r &= (\bF, \bC) &&\tn{ for all } \bC\in \bcurl\nolimits^k_{rz}\hcurlk.
\end{aligned}
\end{equation}
Then, for some $\br_h\in  \bcurl\nolimits^k_{rz}\bB_h$, one can show that $\bb_h\in\bB_h$ satisfies
\begin{equation} \label{mixed2_disc}
\begin{aligned}
(\bb_h,\bv_h)_r - (\bcurl\nolimits^k_{rz}\bv_h, \br_h)_r &= 0 &&\tn{ for all } \bv_h\in\bB_h, \\
(\bcurl\nolimits^k_{rz}\bb_h, \bC_h)_r &= (\bF, \bC_h) &&\tn{ for all } \bC_h\in \bcurl\nolimits^k_{rz}\bB_h.
\end{aligned}
\end{equation}
\end{comment}
%%%%%%%%

By the definition of $\bb$ and $\bb_h$ and Theorem~\ref{ProjSet} item (\ref{comm}), we have 
\[
(\bb-\bb_h,\bb_h-\Pi_h^{c,k}\bb)_r=0.
\]
This implies that
\[
(\bb-\bb_h,\bb-\bb_h)_r = (\bb-\bb_h, \bb-\Pi_h^{c,k}\bb)_r,
\] 
and so
\begin{equation} \label{a1}
\begin{aligned}
\|\bb-\bb_h\|_{\Lr} &\leq \|\bb-\Pi^{c,k}_h\bb\|_{\Lr}, \\
&\leq Ch(|b_\theta|_{H^1_r(\Omega)} + \|kb_z\|_{\tilde{H}^1_r(\Omega)}  +  \|kb_r+b_\theta \|_{\tilde{H}^1_r(\Omega)}) &&\tn{ by Theorem~\ref{cc}, } \\
&\leq Ch\|\bcurl\nolimits^k_{rz}\bb\|_{\Lr} = Ch\|\bcurl\nolimits^k_{rz}\bb_h\|_{\Lr}. 
\end{aligned}
\end{equation}
The last inequality above follows from the continuous embedding result in \cite[Theorem 2.10]{FSCM_Maxwell}.
Next, notice that
\begin{equation} \label{a0}
(\bcurl\nolimits_{rz}^{k}\bb_h, \bcurl\nolimits_{rz}^{k}\bv_H)_r = \Lambda(\bw_h-P_H\bw_h, \bcurl\nolimits_{rz}^{k}\bv_H)=0 \quad\tn{ for all } \bv_H\in \bB_H.
\end{equation}
Then, since $\br\in \mathrm{null}(\dive\nolimits_{rz}^k)=\mathrm{range}(\bcurl\nolimits_{rz}^{k})$ and $\Pi_H^{d,k}$ satisfies the commuting diagram property,  we have
\begin{equation} \label{a2}
\begin{aligned}
\| \bb \|_{\Lr}^2 &= (\bb,\bcurl\nolimits_{rz}^{k*}\br)_r, \\
&= (\bcurl\nolimits_{rz}^{k}\bb, \br)_r, \\
&= (\bcurl\nolimits_{rz}^{k}\bb_h, \br)_r, \\ % &&\tn{ since } \bF\in\bC_h , \\
&=  (\bcurl\nolimits_{rz}^{k}\bb_h, \br-\Pi_H^{d,k}\br)_r &&\tn{ by~(\ref{a0}), } \\% and since $\br\in range(\bcurl\nolimits_{rz}^{k})$, } \\
&\leq \|\bcurl\nolimits_{rz}^{k}\bb_h\|_{\Lr}\| \br-\Pi_H^{d,k}\br \|_{\Lr}, \\
&\leq  CH\|\bcurl\nolimits_{rz}^{k}\bb_h\|_{\Lr}(|\br_{rz}|^2_{H^1_r(\Omega)}+ \| kr_\theta -r_r \|_{\tilde{H}^1_r(\Omega)}) &&\tn{ by Theorem~\ref{concreteD}, }\\
&\leq CH\|\bcurl\nolimits_{rz}^{k}\bb_h\|_{\Lr}\|\bcurl\nolimits_{rz}^{k*}\br\|_{\Lr} &&\tn{ by  \cite[Theorem 2.10]{FSCM_Maxwell},} \\
\end{aligned}
\end{equation}
Since $\bb=\bcurl\nolimits_{rz}^{k*}\br$, (\ref{a2}) implies that
\begin{equation} \label{a3}
\| \bb \|_{\Lr} \leq CH\|\bcurl\nolimits_{rz}^{k}\bb_h\|_{\Lr}.
\end{equation}
Therefore, by the triangle inequality, (\ref{a1}), and (\ref{a3}), we conclude that
\begin{equation} \label{a4}
\| \bb_h \|_{\Lr} \leq CH\|\bcurl\nolimits_{rz}^{k}\bb_h\|_{\Lr} \leq CH\| \bw_h - P_H\bw_h \|_{\Lr}, 
\end{equation}
and this completes the proof.
\end{proof}

Now we are ready to prove Theorem~\ref{main}. 

\vspace{\baselineskip}
\underline{\itshape Proof of Theorem~\ref{main}.  }

\vspace{\baselineskip}
The proof will be complete once we verify the two conditions stated in Lemma~\ref{lem:cond}. 
The proof of the limited interaction is standard \cite{AFW:2000, mg:handbook}, so we will only prove the existence of a stable decomposition.
In other words, we will show that the subspace decomposition (\ref{decomp}) is stable in the sense that if $\bv_l\in(I - P_{l-1})\bC_l$ and
\[
\bv_l = \sum_{j=1}^{N_{l}}\bv_{l,j} \tn{ where } \bv_{l,j}\in \bC_{l,j}
\]
is the decomposition (\ref{decomp}), then
  \[
  \sum_{j=1}^{N_{l}} \Lambda(\bv_{l,j}, \bv_{l,j}) \leq C_1 \Lambda(\bv_l, \bv_l).
  \]
Consider the following Helmholtz decomposition:
 \begin{equation} \label{h1}
 \bv_l=\bcurl\nolimits_{rz}^k\bb_l + \bbc_l.
 \end{equation} 
Then, by (\ref{decomp}), 
\[
\bbc_l = \sum_{j=1}^{N_l} \bbc_{l,j} \tn{ for } \bbc_{l,j}\in \bC_{l,j},
\]
and by considering a decomposition like (\ref{decomp}) for $\bB_l$, we also have
\[
\bb_l=\sum_{j=1}^{N_l}\bb_{l,j} \tn{ for } \bb_{l,j}\in\bB_{l,j}.
\]
Then, $\bv_{l,j}=\bcurl\nolimits_{rz}^k\bb_{l,j} + \bbc_{l,j}$, and 
\begin{align*}
\sum_{j=1}^{N_{l}} \| \bv_{l,j} \|_\Lambda^2 &= \sum_{j=1}^{N_{l}}  \| \bcurl\nolimits_{rz}^k\bb_{l,j} + \bbc_{l,j} \|_\Lambda^2, \\
&= \sum_{j=1}^{N_{l}} \| \bcurl\nolimits_{rz}^k\bb_{l,j}  \|_{\Lr}^2 + \| \bbc_{l,j} \|_{\Lr}^2 + \| \dive\nolimits_{rz}^k\bbc_{l,j} \|_{\Lr}^2, \\
&\leq  C\sum_{j=1}^{N_{l}} h^{-2} \| \bb_{l,j}  \|_{\Lr}^2 + (1+h^{-2}) \| \bbc_{l,j} \|_{\Lr}^2, \\
&\leq Ch^{-2}( \| \bb_{l}  \|_{\Lr}^2 + \| \bbc_{l} \|_{\Lr}^2), \\
&\leq Ch^{-2}(CH^2\| \bv_l \|_{\Lr} + CH^2\| \bv_l \|_{\Lambda}) &&\tn{ by Lemma~\ref{lem2}, } \\
&\leq C\| \bv_l \|_{\Lambda}.
\end{align*}
Note that we are using $h$ to represent the meshsize of mesh level $l$ and 
$H$ to represent that of mesh level $l-1$.
This completes the proof of Theorem~\ref{main}.

\section{Numerical Results} \label{numerics}

In this section, we will report numerical results that supports the theory presented in this paper.
In particular, we present convergence rates for the mixed problem (\ref{mixed_disc}) and the convergence rates for the multigrid V-cycle when applied to (\ref{problem}).
Uniform meshes are used for all examples.

The computer implementation of (\ref{mixed_disc}) is done in the usual way. %(See \cite[section 6]{O:2019} for details.) 
In Table~\ref{ex1}, we report the $L^2_r(\Omega)$-norm of the observed errors when $k=1$.
In Table~\ref{ex2}, we do the same for $k=2$.
In both cases, we use the square domain $\Omega=[0,1]^2$ and choose the right hand side data function $f$ so that
the exact solution $(\bz, p)\in \bH_r(\dive\nolimits^k, \Omega)\times H_r( \grad\nolimits^k, \Omega)$ is
\begin{align*}
p  &= \sin(\pi z)\cos(0.5\pi r)r^2, \\
\bz &=  \left[ {\begin{array}{c}
     -2r\cos(0.5\pi r)\sin(\pi z) + 0.5r^2\pi\sin(0.5\pi r)\sin(\pi z) \\
   -kr\sin(\pi z)\cos(0.5\pi r)  \\
   -r^2\pi\cos(0.5\pi r)\cos(\pi z)
  \end{array} } \right].
\end{align*}
These results are consistent with Theorem~\ref{error}. As for $\left\| p-p_h \right\|_{L^2_r(\Omega)}$, we note that
\[
\left\| p-p_h \right\|_{L^2_r(\Omega)} \leq \left\| p-\Pi_h^S p \right\|_{L^2_r(\Omega)} + \left\| \Pi_h^S p - p_h \right\|_{L^2_r(\Omega)}
\]
and that the first term on the right-hand-side which is of $\mathcal{O}(h)$ dominates even though the second term is of $\mathcal{O}(h^2)$.
\begin{table} 
\caption{Mixed Problem Convergence Rates for Fourier-mode $k=1$}
\label{ex1} 
\begin{center}
 \begin{tabular}{|c| c | c| c| c | c| c |} 
 \hline
 MeshLevel & $\left\| \bz-\bz_h \right\|_{L^2_r(\Omega)}$ & rate & $\left\| p-p_h \right\|_{L^2_r(\Omega)}$  & rate &  $\left\| \Pi_h^S p-p_h \right\|_{L^2_r(\Omega)}$ & rate   \\ [0.5ex] 
 \hline
1  & 2.6827e-01  &          & 4.7997e-02  &         & 1.5531e-02  &         \\
2  & 2.3739e-01  & 0.18  & 4.0432e-02  & 0.25 & 1.5324e-02  & 0.02 \\
3  & 1.4887e-01  & 0.67  & 2.4285e-02  & 0.74 & 5.8420e-03  & 1.39 \\
4  & 7.9202e-02  & 0.91  & 1.2563e-02  & 0.95 & 1.6590e-03  & 1.82 \\ 
5  & 4.0238e-02  & 0.98  & 6.3259e-03  & 0.99 & 4.2928e-04  & 1.95 \\
6  & 2.0200e-02  & 0.99  & 3.1682e-03  & 1.00 & 1.0827e-04  & 1.99 \\
7  & 1.0110e-02  & 1.00  & 1.5847e-03  & 1.00 & 2.7126e-05  & 2.00 \\
8  & 5.0564e-03  & 1.00  & 7.9244e-04  & 1.00 & 6.7853e-06  & 2.00 \\
 \hline
\end{tabular} 
\end{center}
\end{table}

\begin{table} 
\caption{Mixed Problem Convergence Rates for Fourier-mode $k=2$}
\label{ex2} 
\begin{center}
 \begin{tabular}{|c| c | c| c| c | c| c |} 
 \hline
 MeshLevel & $\left\| \bz-\bz_h \right\|_{L^2_r(\Omega)}$ & rate & $\left\| p-p_h \right\|_{L^2_r(\Omega)}$  & rate &  $\left\| \Pi_h^S p-p_h \right\|_{L^2_r(\Omega)}$ & rate   \\ [0.5ex] 
 \hline
1  &  2.7520e-01 &          & 4.6388e-02  &          & 9.4515e-03  &   \\
2  & 2.4466e-01  & 0.17  & 3.9077e-02  & 0.25  & 1.1274e-02  & -0.25 \\
3  & 1.5192e-01  & 0.69  & 2.4055e-02  & 0.70  & 4.7994e-03  & 1.23 \\
4  & 8.0665e-02  & 0.91  & 1.2533e-02  & 0.94  & 1.4118e-03  & 1.77 \\
5  & 4.0967e-02  & 0.98  & 6.3221e-03  & 0.99  & 3.6878e-04  & 1.94 \\ 
6  & 2.0565e-02  & 0.99  & 3.1677e-03  & 1.00  & 9.3232e-05  & 1.98 \\ 
7  & 1.0293e-02  & 1.00  & 1.5847e-03  & 1.00  & 2.3373e-05  & 2.00 \\ 
8  & 5.1476e-03  & 1.00  & 7.9244e-04  & 1.00  & 5.8474e-06  & 2.00 \\
 \hline
\end{tabular} 
\end{center}
\end{table}

Next we verify the uniform convergence rate for the multigrid V-cycle algorithm applied to (\ref{problem}).
We consider two different domains here: the unit square $\Omega_1=[0,1]^2$ and the L-shape domain $\Omega_2 = [0,1]^2\backslash [0.5,1]^2$.
For this example, we choose $f=0$. The initial value $\bx_0$ was chosen randomly, and the stopping criteria was given by $\| \bx_n \|_{\Lambda}/\|\bx_0\|_{\Lambda}<10^{-7}$,
where $\bx_n$ denotes the $n$-th iteration of the multigrid V-cycle. The order of convergence was computed by taking the average of $\| \bx_n \|_{\Lambda}/\|\bx_{n-1}\|_{\Lambda}$.
The prolongation matrix and the restriction matrix is implemented in the usual way.
Tables~\ref{ex3} and \ref{ex4} reports convergence rate for $\Omega_1$ and $\Omega_2$ respectively for various Fourier-modes denoted by $k$.
It is clear that the convergence rate is bounded uniformly as proved in Theorem~\ref{main} for convex domains. This is noticeable even for the L-shape domain $\Omega_2$ suggesting that 
the theory can be extended to non-convex domains.

\begin{table}
\caption{Multigrid V-cycle Convergence Rates for Square Domain $\Omega_1$} \label{ex3}
\begin{center}
\begin{tabular}{|c||c|c|c|c|}
\hline
MeshLevel & $k=1$ &  $k=2$ &  $k=-1$ &  $k=-2$ \\
\hline
2  &  0.12 & 0.06 & 0.11 & 0.07 \\
3  &  0.13 & 0.12 & 0.13 & 0.09 \\
4  &  0.23 & 0.19 & 0.25 & 0.19 \\
5  &  0.24 & 0.23 & 0.27 & 0.22 \\
6  &  0.25 & 0.24 & 0.27 & 0.22 \\
7  &  0.25 & 0.23 & 0.27 & 0.21 \\
8  & 0.24 & 0.21 & 0.26 & 0.18 \\
9  &  0.24 & 0.21 & 0.26 & 0.18 \\
 \hline
\end{tabular}
\end{center}
\end{table}

\begin{table}
\caption{Multigrid V-cycle Convergence Rates for L-Shape Domain $\Omega_2$} \label{ex4}
\begin{center}
\begin{tabular}{|c||c|c|c|c|}
\hline
MeshLevel & $k=1$ &  $k=2$ &  $k=-1$ &  $k=-2$ \\
\hline
2 & 0.13 & 0.16 & 0.16 & 0.16 \\
3 & 0.19 & 0.13 & 0.19 & 0.13 \\
4 & 0.25 & 0.24 &  0.25 & 0.25 \\
5 & 0.27 & 0.25 & 0.28 & 0.25 \\
6 & 0.29 & 0.26 & 0.29 & 0.26 \\
7 & 0.30 & 0.26 & 0.30 & 0.26 \\
8 & 0.31 & 0.27 & 0.32 & 0.26 \\
  \hline
\end{tabular}
\end{center}
\end{table}

\section{Concluding Remarks}

We have provided a multigrid algorithm that can be applied to axisymmetric H(div)-problems with general data
by using a recently developed Fourier finite element space for Fourier modes $|k|\geq 1$. 
Under the assumption that the axisymmetric domain is convex, we proved that the multigrid V-cycle
will converge uniformly with respect to the meshsize. 
%The use of commuting projections in the multigrid analysis for axisymmetric problems with non-axisymmetric data is also new to our knowledge. 
Multigrid analysis for axisymmetric H(div)-problems for Fourier mode $k=0$ along with
axisymmetric H(curl)-problems with non-axisymmetric data remain as future work.

\section{Appendix: Proof of Theorem~\ref{cc}} \label{appendix}

%%%
\begin{comment}
Define the set of global degrees of freedom of $\bB_h$ as 
\begin{equation} \label{newB2}
\sigma^B_K (\bu) = \bar{\sigma}(u_\theta) 
\end{equation}
and
\begin{equation} \label{newB}
\begin{aligned}
\sigma^B_{\ba}(\bu) &=  \int_{e(\ba)} r^3\tilde{\bu}_{rz}\cdot\bt ds &&\tn{ for all mesh vertices } \ba \in\bar{\Gamma}_0, \\
\sigma^B_e (\bu) &= \int_e \tilde{\bu}_{rz}\cdot\bt ds &&\tn{ for all mesh edges } e \cap \bar{\Gamma}_0 = \emptyset , \\
\sigma^B_{K_0} (\bu) &= \int_{K_0} r^3 \curl_{rz} \tilde{\bu}_{rz}  dA &&\tn{ for all mesh triangles } K_0 \cap \bar{\Gamma}_0 \ne \emptyset,
\end{aligned}
\end{equation}
where $\bar{\sigma}$ can be the degrees of freedom used in the \Clement operator constructed in \cite[section 4.3]{BBD:2006}.
\end{comment}
%%%

We prove the theorem by showing that 
\[
\inf_{\bv_h\in \bB_h}\left\| \bv - \bv_h \right\|_{L^2_r(\Omega)} \leq Ch(|v_\theta|_{H^1_r(\Omega)} + \|kv_z\|_{\tilde{H}^1_r(\Omega)}  +  \|kv_r+v_\theta\|_{\tilde{H}^1_r(\Omega)}).
\]
To do so, we first recall the weighted Clement operator $\bar{\Pi}_h$
constructed in \cite[section 4.3]{BBD:2006}. % that maps $L^2_r(\Omega)$ onto the Lagrange finite element space of order one.
For each vertex $a_j$ in $\TT_h$, associate an arbitrary triangle $K_j$ that contains $a_j$, and let
\[
P_1=\{ar+bz+c: a,b,c\in \RRR \}.
\] 
Then define 
$\bar{\Pi}_j: L^2_r(K_j) \rightarrow P_1(K_j)$ to be the $L^2_r$-orthogonal projection.
The weighted Cl\'ement operator 
$\bar{\Pi}_h: L^2_r(\Omega) \rightarrow V_h :=\{v\in H^1_r(\Omega): v|_K\in P_1 \tn{ for all } K\in \TT_h  \}$ is defined as
\begin{equation} \label{weighted}
\bar{\Pi}_h u = \sum_{j=1}^{N_a}[(\bar{\Pi}_j(u))(a_j)]\phi_j
\end{equation}
where $N_a$ is the number of vertices in $\TT_h$ and $\phi_j$ denotes the lowest order Lagrangian finite element basis function  associated with $a_j$.
The following error estimate follows from \cite[Theorem 1]{BBD:2006}:
\begin{equation} \label{good}
\| u-\bar{\Pi}_hu \|_{L^2_r(K)} \leq Ch_K|u|_{H^1_r(\bar{\Delta}_K)},
\end{equation}
where $\bar{\Delta}_K$ denotes the union of all triangles that shares at least a common vertex with $K$.
We will write $\sigma_a$ to denote the local degrees of freedom associated with (\ref{weighted}).

Next, let
\[
\bH_r(\curl,\Omega)=\{(v_r,v_z)\in \bL^2_r(\Omega): \partial_zv_r-\partial_rv_z\in L^2_r(\Omega) \},
\]
and recall the lowest order \Nedelec space:
\[
\bN_h = \{\bv\in \bH_r(\curl,\Omega): \bv|_K \in ND_1(K) \tn{ for all } K\in\TT_h  \}, 
\]
where 
\[
ND_1=\{(b-az, c+ar): a, b, c\in \RRR \}.
\] 
We now construct a weighted Cl\'ement type operator $\Pi_h: [L^2_{r^3}(\Omega)]^2 \rightarrow \bN_h$ 
that satisfies
\[
\| \tilde{\bv}-\Pi_h\tilde{\bv} \|_{L^2_{r^3}(\Omega)} \leq Ch| \tilde{\bv} |_{H^1_{r^3}(\Omega)}.
\]
We will modify the construction of the basic Clement operator in \cite[section 4.3]{BBD:2006}.
For each edge $e_i$ in $\TT_h$, associate an arbitrary triangle $K_i$ that contains $e_i$.
 Then define 
$\Pi_i: L^2_{r^3}(K_i) \rightarrow ND_1(K_i)$ as the $L^2_{r^3}$-orthogonal projection, i.e.,
\[
(\Pi_i\tilde{\bv}, \bq)_{r^3} = (\tilde{\bv}, \bq)_{r^3} \quad\tn{ for all } \bq\in ND_1(K_i).
\]
The weighted Cl\'ement type operator $\Pi_h: L^2_{r^3}(\Omega) \rightarrow \bN_h$ is defined as 
\begin{equation} \label{degf}
\Pi_h\tilde{\bv} = \sum_{i=1}^{N_e} [\int_{e_i} \Pi_i\tilde{\bv} \cdot \bt ds] \bpsi_{e_i}
\end{equation}
where $N_e$ denotes the number of edges in $\TT_h$, and
$\bpsi_{e_i}$ is the usual \Nedelec basis associated with the edge $e_i$.
%Here, we are using the $(i,j,k)$-permutation notation to label the vertices and edges of a given triangle as usual. 
We will use $\sigma_e$ to denote the local degrees of freedom corresponding to $(\ref{degf})$,
and $\Delta_K$ to denote the union of all triangles that share an edge with triangle $K$ in $\TT_h$.
We show that $\Pi_h$  satisfies the following lemma.
\begin{lemma} \label{star2}
For all $K\in \TT_h$ and $\tilde{\bv}\in [H_{r^3}^1(\Delta_K)]^2$, we have
\[
\| \tilde{\bv} - \Pi_h\tilde{\bv} \|_{L^2_{r^3}(K)} \leq Ch| \tilde{\bv} |_{H^1_{r^3}(\Delta_K)}.
\]
\end{lemma}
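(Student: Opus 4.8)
The plan is to mimic the standard Clément-operator argument, transplanted to the $r^3$-weighted setting, exploiting the fact that the $L^2_{r^3}$-orthogonal projections $\Pi_i$ onto $ND_1(K_i)$ reproduce constants (indeed reproduce all of $ND_1$). First I would fix $K\in\TT_h$. The key invariance to establish is that if $\tilde{\bv}|_{\Delta_K}$ happens to lie in $ND_1$, then $\Pi_h\tilde{\bv}=\tilde{\bv}$ on $K$: since each edge $e_i$ of $K$ has an associated patch triangle $K_i\subset\Delta_K$ on which $\Pi_i$ fixes $ND_1$, the edge degree of freedom $\int_{e_i}\Pi_i\tilde{\bv}\cdot\bt\,ds$ equals $\int_{e_i}\tilde{\bv}\cdot\bt\,ds$, and unisolvence of the Nédélec element then forces $\Pi_h\tilde{\bv}|_K=\tilde{\bv}|_K$. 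With this local reproduction property in hand, I would write, for an arbitrary $\bq\in ND_1(\Delta_K)$,
\[
\| \tilde{\bv} - \Pi_h\tilde{\bv} \|_{L^2_{r^3}(K)} \le \| \tilde{\bv} - \bq \|_{L^2_{r^3}(K)} + \| \Pi_h(\bq-\tilde{\bv}) \|_{L^2_{r^3}(K)}.
\]

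The first term on the right is controlled by taking the infimum over $\bq\in ND_1$ and invoking the $r^3$-weighted Bramble--Hilbert/approximation estimate on $\Delta_K$ (the analogue of (\ref{r3_1}), extended from constants to $ND_1$-valued functions by the same $r\mapsto r^3$ modification of the proof of \cite[Lemma 5]{BBD:2006}), giving a bound $Ch|\tilde{\bv}|_{H^1_{r^3}(\Delta_K)}$. For the second term I would bound $\Pi_h$ in the weighted $L^2_{r^3}$-norm locally: on $K$, $\Pi_h\bw=\sum_{e_i\subset K}[\int_{e_i}\Pi_i\bw\cdot\bt\,ds]\bpsi_{e_i}$, so I estimate each edge functional by a weighted trace/scaling inequality, $|\int_{e_i}\Pi_i\bw\cdot\bt\,ds|\le C\,(\text{weight factor})\,\|\Pi_i\bw\|_{L^2_{r^3}(K_i)}$, then use $\|\Pi_i\bw\|_{L^2_{r^3}(K_i)}\le\|\bw\|_{L^2_{r^3}(K_i)}$ (orthogonal projections are contractions) together with the known scaling of $\|\bpsi_{e_i}\|_{L^2_{r^3}(K)}$, and finally put $\bw=\bq-\tilde{\bv}$ and again pass to the infimum over $\bq$. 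Summing the two contributions and choosing $\bq$ to be the best $r^3$-weighted $ND_1$-approximant on $\Delta_K$ yields the claimed estimate.

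The main obstacle, and the step requiring care, is the weighted edge-functional and basis-scaling estimate when $K$ abuts the axis $\Gamma_0$, where the weight $r^3$ degenerates; one must track the interplay between the factor of $r^3$ appearing in the measure on $e_i$, the $1/h$ blow-up of the Nédélec basis, and the vanishing of $r$ near the axis, exactly as in the delicate parts of \cite{BBD:2006} and of the proof of Theorem~\ref{concreteD} above. Provided the weighted trace inequality $\int_{e_i} r^3 |\bw\cdot\bt|^2\,ds \le C(h^{-1}\|\bw\|_{L^2_{r^3}(K_i)}^2 + h\,|\bw|_{H^1_{r^3}(K_i)}^2)$ holds uniformly over all triangles (axis-touching or not) — which follows by the same scaling argument used to derive (\ref{r3}) — the remainder of the argument is routine, and the constant $C$ is independent of $h$ and of the position of $K$ relative to the axis. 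Once Lemma~\ref{star2} is established, the proof of Theorem~\ref{cc} proceeds by building a vector-valued Clément-type interpolant onto $\bB_h$ from $\bar{\Pi}_h$ applied to $v_\theta$ and $\Pi_h$ applied to the rescaled horizontal components (mirroring (\ref{pi_div})), and running the telescoping estimate of (\ref{rrr3}) with the roles of $\dive^k_{rz}$-data replaced by the $\bcurl^k_{rz}$-data $kv_z$ and $kv_r+v_\theta$.
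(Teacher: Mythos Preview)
Your proposal is correct and follows the same Clément-type skeleton as the paper: establish local $L^2_{r^3}$-stability of $\Pi_h$, invoke reproduction of a polynomial subspace, and conclude via the weighted Bramble--Hilbert estimate (\ref{r3_2}) on $\Delta_K$.

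Two minor points of divergence are worth noting. First, the paper uses only reproduction of \emph{constants} $\bq\in P_0(\Delta_K)^2\subset ND_1$, which is all that is needed since the target bound carries only the $H^1_{r^3}$-seminorm; your appeal to full $ND_1$-reproduction is correct but buys nothing extra and forces you to justify a Bramble--Hilbert estimate for $ND_1$ that you do not actually need. Second, for the local stability bound $\|\Pi_h\tilde{\bv}\|_{L^2_{r^3}(K)}\le C\|\tilde{\bv}\|_{L^2_{r^3}(\Delta_K)}$ the paper does not use a weighted trace inequality on $e_i$ as you suggest; instead it bounds $|\int_{e_i}\Pi_i\tilde{\bv}\cdot\bt\,ds|\le Ch_{K_i}\|\Pi_i\tilde{\bv}\|_{L^\infty(K_i)}$, pairs this with the explicit scaling $\|\bpsi_{e_i}\|_{L^2_{r^3}(K)}^2\le Cr_K^3$, and then splits into two cases. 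Away from the axis one uses $r_{\max}(K)/r_{\min}(K_i)\le C$ and a standard inverse inequality; on axis-touching triangles one uses $r_K\le Ch_K$ together with the discrete inverse estimate $\|\Pi_i\tilde{\bv}\|_{L^\infty(K_i)}\le Ch_{K_i}^{-5/2}\|\Pi_i\tilde{\bv}\|_{L^2_{r^3}(K_i)}$. This $L^\infty$/case-splitting route is more elementary than establishing a uniform weighted trace inequality, which---while plausible---does not follow directly from the argument for (\ref{r3}) as you assert.
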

\begin{proof}
First of all, we note the following results that follows by direct calculation for any triangle $K$:
\begin{equation} \label{d1}
\| \bpsi_{e} \|_{L^2_{r^3}(K)}^2 \leq Cr_K^3
%\label{d2} r_K^3h_K^2\| v \|^2_{L^\infty(K)} &\leq C\| v \|^2_{L^2_{r^3}(K)} \quad\tn{ for all } v\in P_l(K),
\end{equation}
where $r_K = max_{\bx\in K} r(\bx)$. 

Fix a triangle $K$ in $\TT_h$ and let $e_{1}, e_{2},$ and $e_3$ denote its three edges.
Then
\begin{equation} \label{d4}
\Pi_h\tilde{\bv} |_K = 
 \sum_{i=1}^3  (\int_{e_i} \Pi_i\tilde{\bv} \cdot \bt ds  ) \bpsi_{e_i}.
\end{equation}
For each $i$, we have
\begin{equation} \label{d0}
\|  (\int_{e_i} \Pi_i\tilde{\bv} \cdot \bt ds  ) \bpsi_{e_i} \|_{L^2_{r^3}(K)} \leq Ch_{K_i}\| \Pi_i \tilde{v} \|_{L^\infty(K_i)}\|\bpsi_{e_i} \|_{L^2_{r^3}(K)}.
\end{equation} 
If $K_i$ is a triangle that does not intersect $\Gamma_0$, then we can use $\dfrac{r_{max}(K)}{r_{min}(K_i)}\leq C$, (\ref{d1}), and a standard inverse inequality to obtain   
\begin{equation} \label{d3}
\|  (\int_{e_i} \Pi_i\tilde{\bv} \cdot \bt ds  ) \bpsi_{e_i} \|_{L^2_{r^3}(K)}\leq C\| \tilde{\bv} \|_{L^2_{r^3}(K_i)}.      % \leq C\| \Pi_i\tilde{\bv} \|_{L^2_{r^3}(K_i)} \leq C\| \tilde{\bv} \|_{L^2_{r^3}(K_i)}.
\end{equation}
If $K_i$ is a triangle that intersects $\Gamma _0$, then 
$r_K \leq Ch_K$ so (\ref{d1}) becomes
\begin{equation*} %\label{ddd3}
 \| \bpsi_{e} \|_{L^2_{r^3}(K)}^2 \leq Ch_K^3,
\end{equation*}
and 
\[
\| \Pi_i\tilde{\bv} \|_{L^\infty(K_i)} \leq Ch_{K_i}^{-5/2}\| \Pi_i\tilde{\bv} \|_{L^2_{r^3}(K_i)}
\]
by a standard scaling argument, so (\ref{d3}) continues to hold.
Therefore, by (\ref{d4}) and (\ref{d3}), we have
\begin{equation} \label{d5}
\| \Pi_h\tilde{\bv} \|_{L^2_{r^3}(K)} \leq C\| \tilde{\bv} \|_{L^2_{r^3}(\Delta_K)}.
\end{equation}

%Furthermore, similar to \cite[Lemma 6]{BBD:2006}, 
%It is easy to show that (\ref{r3}) continues to hold when $\Delta_e$ is replaced by $\Delta_K$, i.e.,
%\begin{equation} \label{rr3}
%\inf_{q \in P_0(\Delta_K)} \|v-q\|_{L^2_{r^3}(\Delta_k)} \leq Ch|v|_{H^1_{r^3}(\Delta_K)}.
%\end{equation}
Since (\ref{r3}) still holds when $\Delta_e$ is replaced by $\Delta_K$, we have
\begin{equation} \label{r3_2}
\inf_{q \in P_0(\Delta_K)} \|v-q\|_{L^2_{r^3}(\Delta_K)} \leq Ch|v|_{H^1_{r^3}(\Delta_K)},
\end{equation}
where in (\ref{r3_2}), we are using $h$ to denote the diameter of $\Delta_K$.

Now, let $\bq\in P_0(\Delta_K)^2$. Then, for all edges $e_i$ of $K$, $\Pi_i \bq$ is equal to $\bq$, so the restriction of $\Pi_h \bq$ onto $K$ is also equal to $\bq$. Therefore,
\begin{align*}
\| \tilde{\bv} - \Pi_h\tilde{\bv} \|_{L^2_{r^3}(K)} &= \| \tilde{\bv} - \Pi_h\tilde{\bv} + \Pi_h \bq - \bq \|_{L^2_{r^3}(K)}, \\
&\leq \| \tilde{\bv} - \bq \|_{L^2_{r^3}(K)} +  \| \Pi_h (\tilde{\bv} - \bq) \|_{L^2_{r^3}(K)}, \\ % &&\tn{ by triangle inequality, } \\
&\leq C\| \tilde{\bv} - \bq \|_{L^2_{r^3}(\Delta_K)} &&\tn{ by (\ref{d5}), } \\  
&\leq Ch| \tilde{\bv} |_{H^1_{r^3}(\Delta_K)} &&\tn{ by (\ref{r3_2}).}
\end{align*}
%Summing over all triangles, we get the desired result. 
This completes the proof of the Lemma.
\end{proof}

Let us now show that
\[
\inf_{\bv_h\in \bB_h}\left\| \bv - \bv_h \right\|_{L^2_r(\Omega)} \leq Ch(|v_\theta|_{H^1_r(\Omega)} + \|kv_z\|_{\tilde{H}^1_r(\Omega)}  +  \|kv_r+v_\theta\|_{\tilde{H}^1_r(\Omega)}).
\]
Given $\bu$, define $\tilde{\Pi}_h^{c,k}\bu\in \bB_h$ locally in the following way:
\[
\tilde{\Pi}_h^{c,k}\bu|_K = 
     \sum_{i=1}^3 
     \sigma_{a_i}(u_\theta)
     \begin{pmatrix}
       -\dfrac{1}{k}\phi_i \\
       \phi_i \\
        0          
     \end{pmatrix}  +
     \sum_{i=1}^3
     \sigma_{e_i}(\begin{pmatrix}
       \frac{ku_r + u_\theta}{r} \\
      \frac{ku_z}{r}     
       \end{pmatrix})
         \begin{pmatrix}
        \dfrac{r}{k}\psi_i^r \\
        0 \\
        \dfrac{r}{k}\psi_i^z    
       \end{pmatrix}.
\]
We used $\bpsi_i=(\psi_i^r,\psi_i^z)^T$ to denote $\bpsi_{e_i}$ for simplicity.
The interpolation operator used in \cite{O:2015} are different from $\tilde{\Pi}_h^{c,k}$ in two ways: 
in \cite{O:2015}, the standard nodal interpolation operator was used for $u_\theta$ instead of (\ref{weighted}),
and the standard lowest order \Nedelec interpolation operator was used for $(\frac{ku_r + u_\theta}{r}, \frac{ku_z}{r})^T$ 
instead of (\ref{degf}).

%weighted Cl\'ement type operators.    

We will show that $\tilde{\Pi}_h^{c,k}$ satisfies the following error estimate:
\[
\left\| \bv - \tilde{\Pi}_h^{c,k}\bv \right\|_{L^2_r(\Omega)} \leq Ch(|v_\theta|_{H^1_r(\Omega)} + \|kv_z\|_{\tilde{H}^1_r(\Omega)}  +  \|kv_r+v_\theta\|_{\tilde{H}^1_r(\Omega)}).
\]
This is true, since
\begin{equation*} %\label{errB}
\begin{aligned}
&\left\| \bv - \tilde{\Pi}_h^{c,k}\bv \right\|_{L^2_{r}(K)}^2 \\
&= \left\| v_\theta -   \sum_{i=1}^3\sigma_{a_i} (v_\theta) \phi_i  \right\|^2_{L^2_{r}(K)} 
+\left\| v_z - \sum_{i=1}^3\sigma_{e_i} (\begin{pmatrix}
       \frac{kv_r + v_\theta}{r} \\
      \frac{kv_z}{r}     
       \end{pmatrix}) \dfrac{r}{k}\psi_i^z  \right\|^2_{L^2_{r}(K)}  \\
&+ \left\| v_r + \sum_{i=1}^3\sigma_{a_i} (v_\theta) \dfrac{1}{k}\phi_i -   \sum_{i=1}^3\sigma_{e_i} (\begin{pmatrix}
       \frac{kv_r + u_\theta}{r} \\
      \frac{kv_z}{r}     
       \end{pmatrix}) \dfrac{r}{k}\psi_i^r   \right\|^2_{L^2_{r}(K)}, \\
&\leq Ch^2|v_\theta|^2_{H^1_r(\bar{\Delta}_K)} +\left\| \dfrac{r}{k}(\dfrac{kv_z}{r} - \sum_{i=1}^3\sigma_{e_i} (\begin{pmatrix}
       \frac{kv_r + v_\theta}{r} \\
      \frac{kv_z}{r}     
       \end{pmatrix}) \psi_i^z)  \right\|^2_{L^2_{r}(K)}  \\
&+ \left\| \dfrac{1}{k}(-v_\theta + \sum_{i=1}^3\sigma_{a_i} (v_\theta) \phi_i) + \dfrac{r}{k}(\dfrac{kv_r + v_\theta}{r} -   \sum_{i=1}^3\sigma_{e_i} (\begin{pmatrix}
       \frac{kv_r + v_\theta}{r} \\
      \frac{kv_z}{r}     
       \end{pmatrix}) \psi_i^r)   \right\|^2_{L^2_{r}(K)} \quad\tn{ by (\ref{good}), } \\
%&\leq C(h^2|v_\theta|^2_{H^1_r(\bar{\Delta}_K)} + \dfrac{1}{k^2}\left\| r(\dfrac{kv_z}{r} - \sum_{i=1}^3\sigma_{e_i} (\begin{pmatrix}
 %      \frac{kv_r + v_\theta}{r} \\
  %    \frac{kv_z}{r}     
   %    \end{pmatrix}) \nu_i^z)  \right\|^2_{r,K}  \\
%&+ \dfrac{1}{k^2}\| r(\dfrac{kv_r + v_\theta}{r} -   \sum_{i=1}^3\sigma_{e_i} (\begin{pmatrix}
  %     \frac{kv_r + u_\theta}{r} \\
    %  \frac{kv_z}{r}     
      % \end{pmatrix}) \nu_i^r)  \|_{r,K}^2), \\
&= C (h^2|v_\theta|^2_{H^1_r(\bar{\Delta}_K)} + \left\| \dfrac{kv_z}{r} - \sum_{i=1}^3\sigma_{e_i} (\begin{pmatrix}
       \frac{kv_r + v_\theta}{r} \\
      \frac{kv_z}{r}     
       \end{pmatrix}) \psi_i^z  \right\|^2_{L^2_{r^3}(K)} + \left\| \dfrac{kv_r +v_\theta}{r} -   \sum_{i=1}^3\sigma_{e_i} (\begin{pmatrix}
       \frac{kv_r + v_\theta}{r} \\
      \frac{kv_z}{r}     
       \end{pmatrix}) \psi_i^r  \right\|^2_{L^2_{r^3}(K)}),  \\
&\leq Ch^2(|v_\theta|^2_{H^1_r(\bar{\Delta}_K)} + | \dfrac{kv_z}{r}  |_{H^1_{r^3}(\Delta_K)}^2 + |\dfrac{kv_r +v_\theta}{r} |_{H^1_{r^3}(\Delta_K)}^2) \quad\tn{ by Lemma~\ref{star2}, } \\
&\leq Ch^2(|v_\theta|^2_{H^1_r(\bar{\Delta}_K)} + \|kv_z\|^2_{\tilde{H}^1_r(\Delta_K)}  +  \|kv_r+v_\theta\|^2_{\tilde{H}^1_r(\Delta_K)}).
\end{aligned}
\end{equation*}
This completes the proof of the theorem.

\bibliographystyle{siam}	
\bibliography{reference}	

\end{document}